\DeclareSymbolFontAlphabet{\amsmathbb}{AMSb}%
\DeclarePairedDelimiter{\ceil}{\lceil}{\rceil}
\DeclarePairedDelimiter{\floor}{\lfloor}{\rfloor}
\newcommand{\IP}{\amsmathbb{P}}
\newcommand{\R}{\amsmathbb{R}}
\newcommand{\N}{\amsmathbb{N}}
\newcommand{\IS}{\amsmathbb{S}}
\newcommand{\gb}{\beta}
\renewcommand{\gg}{\gamma}
\newcommand{\gk}{\kappa}
\newcommand{\gO}{\Omega}
\newcommand{\cB}{\mathcal{B}}
\newcommand{\cF}{\mathcal{F}}
\newcommand{\cL}{\mathcal{L}}
\newcommand{\cW}{\mathcal{W}}
\newcommand{\fbet}{\{\beta\}}
\DeclareMathOperator{\E}{\amsmathbb{E}} 
\DeclareMathOperator{\Cov}{\mathsf{Cov}}
\newcommand{\dd}{\,\mathrm{d}}
\newcommand{\KL}{Karhunen--Lo\`eve }
\newtheorem{lemma}{Lemma}[section]
\newtheorem{proposition}[lemma]{Proposition}
\newtheorem{theorem}[lemma]{Theorem}
\theoremstyle{remark}
\theoremstyle{definition}
\definecolor{darkgreen}{rgb}{0,.6,0}
\begin{document}
	\title[SFEM approximation of spherical Gaussian random fields]{Surface finite element approximation of spherical Whittle--Mat\'ern Gaussian random fields
	}

	\author[E.~Jansson]{Erik Jansson} \address[Erik Jansson]{\newline Department of Mathematical Sciences
	\newline Chalmers University of Technology \& University of Gothenburg
	\newline S--412 96 G\"oteborg, Sweden.} 
\email[]{erikjans@chalmers.se}
	
	\author[M.~Kov\'acs]{Mih\'aly Kov\'acs} \address[Mih\'aly Kov\'acs]{\newline Faculty of Information Technology and Bionics
		\newline P\'azm\'any P\'eter Catholic University
		\newline H-1444 Budapest, P.O. Box 278, Hungary.
		\newline and
		\newline Department of Mathematical Sciences
		\newline Chalmers University of Technology \& University of Gothenburg
		\newline S--412 96 G\"oteborg, Sweden.} \email[]{kovacs.mihaly@itk.ppke.hu}
	
	\author[A.~Lang]{Annika Lang} \address[Annika Lang]{\newline Department of Mathematical Sciences
		\newline Chalmers University of Technology \& University of Gothenburg
		\newline S--412 96 G\"oteborg, Sweden.} \email[]{annika.lang@chalmers.se}
		
	\thanks{
		Acknowledgement. 
		The author thank the anonymous referees for helpful comments.
		EJ and AL's work was partially supported by the Swedish Research Council (VR) through grant no.\ 2020-04170, by the Wallenberg AI, Autonomous Systems and Software Program (WASP) funded by the Knut and Alice Wallenberg Foundation, and by the Chalmers AI Research Centre (CHAIR). MK acknowledges the support of the Marsden Fund of the Royal Society of New Zealand through grant no.~18-UOO-143, the Swedish Research Council (VR) through grant no.\ 2017-04274, and the NKFIH through grant no.\ 131545}

	\subjclass{35R60, 60G60, 60H35, 58J05, 60H15, 65C30, 65N12, 65N30}

	\keywords{Stochastic partial differential equations. Gaussian random fields. Fractional operators. Parametric finite element methods. Strong convergence. Sphere. Surface finite element method.}
	
\begin{abstract}
Spherical Whittle--Mat\'ern Gaussian random fields are considered as solutions to fractional elliptic stochastic partial differential equations on the sphere. Approximation is done with surface finite elements. While the non-fractional part of the operator is solved by a recursive scheme, a quadrature of the Dunford--Taylor integral representation is employed for the fractional part. Strong error analysis is performed, and the computational complexity is bounded in terms of the accuracy. Numerical experiments for different choices of parameters confirm the theoretical findings. 		
\end{abstract}
	
	\maketitle

\section{Introduction}

In recent years Gaussian random fields (GRFs for short) have found use as a modeling tool in a variety of applications, such as geostatistics, materials science, and cosmology \cite{BARMAN2017,Guillot1999,Wandelt2012}. In many cases the domain of interest is~$\R^d$ or a subset thereof, but in some applications the scale of the domain makes it infeasible to disregard its geometry, for example in global geospatial modeling or simulation of the cosmic background radiation, see \cite{Lang2015,MP11, Porcu2018} and references therein. In these cases Gaussian random fields can instead be defined on the sphere making the study and simulation of these fields a topic of importance.

An example of a spherical Gaussian random field and our subject of study is the \emph{Whittle--Mat\'ern} field, which is defined as the solution~$u$ to the stochastic partial differential equation (SPDE) 
\begin{equation}\label{eq:whittle_euq}
(\kappa^2-\Delta_{\IS^2})^\beta u = \cW,
\end{equation}
where $\beta,\kappa>0$ are regularity parameters and $\cW$ denotes white noise on the sphere. Whittle--Mat\'ern fields are the spherical analogue to Mat\'ern fields on~$\R^d$~\cite{LRL11,W63}.  These random fields are of special interest since they are flexible in the sense that by only changing the two parameters $\beta$ and $\kappa$ one can obtain a wide range of smoothness and correlation lengths, where the former is determined by~$\beta$ and the latter by~$\kappa$~\cite{BK20,Guinness2016}. Therefore, they are often used in modeling which motivates the need for simulation methods for these particular fields. In this paper we propose a new simulation algorithm for any smoothness parameter $\gb>1/2$ based on surface finite elements and analyze its convergence and computational complexity. The advantage of the simulated random fields is their representation in terms of finite elements which makes them suitable as input noise to simulations of stochastic and random partial differential equations.

In the case of Euclidean domains, the fields are defined through their covariance functions which may serve as a starting point for simulations. On the sphere, however, simply substituting the great circle distance into the covariance function will not result in a valid covariance function~\cite{Guinness2016}. As earlier noted, Mat\'ern fields on surfaces are instead defined as solutions to the SPDE in Equation~\eqref{eq:whittle_euq}, which means that another approach is needed in the particular case of the sphere as well as in the general case of compact surfaces. 
	
One possible approach in the case of the sphere is to define a new family of admissible covariance functions that capture the desired covariance behavior~\cite{Alegria2021}. Another possibility is to use finite element techniques in order to approximate solutions to SPDE~\eqref{eq:whittle_euq}. Finite element approaches have been recently studied in the case of Euclidean domains, see for instance \cite{BK20,BKK18,BKK20,CK20}. 
For other recent papers considering simulation and sampling of Gaussian random fields using various methods, including fields on surfaces, see, e.g., \cite{AEL20, BD20,BK21, HHKS21,Herrmann2019, LP21+} and references therein.

If finite element methods are to be used in the spherical setting, a new challenge occurs compared to the Euclidean one, namely that a discretization of the geometry might be needed requiring an additional approximation. In this paper the framework used to discretize the geometry is the surface finite element method (SFEM) by Dziuk and Elliot~\cite{Dziuk2013}. Using SFEM in combination with a sinc quadrature approximation of the fractional part of the operator $(\kappa-\Delta_{\IS^2})^{-\beta}$ rewritten as a Dunford--Taylor integral, we manage to approximate solutions for all $\beta > 1/2$ by a recursive scheme with continuous finite elements without the need for higher order global smoothness.
In our main result Theorem~\ref{th:yt1}, we show convergence of $\operatorname{O}(h^{(2\gb-1)/(\gb+1)})$ with respect to the mesh size~$h$ when all error contributions are balanced. The computational work is bounded by $\operatorname{O}(h^{-3/2}(\ln h)^2)$ and could be further reduced to $\operatorname{O}(h^{-1}|\log h|^{7/2})$ using the preconditioning approach in~\cite{Herrmann2019}.

While spectral methods (see, e.g., \cite{CL18,Lang2015,LFR19} and references therein) and curved elements as in boundary element methods could be used in the specific case of this paper~\cite{Herrmann2019}, SFEM has, as a more traditional mesh-based approach, the advantage that it is easier to implement and compatible with existing software used in industry such as FEniCS~\cite{FEniCS} or DUNE~\cite{DUNE}. This paves the way to a broad application of the presented method in applications requiring the simulation of random fields on the sphere as input. Another benefit of the developed algorithm is the universality of the approach.
The setting of the particular operator $(\kappa^2-\Delta_{\IS^2})^\beta$ on the sphere serves as a stepping stone for development of more general operators on  a wider class of surfaces and manifolds.
It should be pointed out that while this method is presented with the main goal of simulating random fields in mind,  it is also possible to use it to solve non-random fractional elliptic partial differential equations  using low order finite elements. 
A natural extension of our approach using SFEM is to operators of the form $(\kappa^2-\nabla_{\IS^2}\cdot( A \nabla_{\IS^2}))$,  where $A \in L^\infty(\IS^2)$, which we leave as a topic for future work.

Furthermore, the algorithm can be extended to higher dimensions provided that the right hand side is sufficiently smooth, as is the case with truncated white noise expansions. This restriction arises due to the need to use Sobolev inequalities in the surface finite element error estimates \cite{Dziuk1988}, \cite[Remark 4.10]{Dziuk2013}. We emphasize that the study of random fields on two-dimensional surfaces is of special interest due to the relevance in applications.

The paper is structured as follows: In Section~\ref{seq:two} background material on the theory of random fields and functional analysis is introduced. This is used to derive a spectral representation of the solution to~\eqref{eq:whittle_euq} in terms of the spherical harmonic functions and a first convergence result for a spectral approximation. The section is concluded with the introduction of the recursive approach to~\eqref{eq:whittle_euq} that allows to approximate the solution with continuous finite elements without additional global smoothness assumptions. In Section~\ref{seq:three} the approximation of the fractional part of the operator is described, and convergence of the quadrature to the spectral approximation is shown. The surface finite element method is introduced in Section~\ref{seq:four} and the SFEM error is bounded. The full error analysis is presented in our main result Theorem~\ref{th:yt1}. A discussion on balancing the errors and estimating the computational work concludes the section. Finally, in Section~\ref{seq:five}, we give numerical experiments in FEniCS that confirm the theoretical findings.

\section{Isotropic Gaussian random fields on the sphere}
\label{seq:two}
We introduce basic properties of isotropic Gaussian random fields and their connection to solutions of stochastic partial differential equations in this section. The presentation is based on~\cite{Lang2015} and we refer the reader to~\cite{MP11} and~\cite{Lang2015} for more details. Convergence of a spectral approximation that will be used in later sections is also given.

The \emph{sphere~$\IS^2$} is defined by
\begin{equation*}
\IS^2=\left\{x \in \R^3 ~: \|x\|=1 \right\},
\end{equation*}
where $\|\cdot\|$ denotes the Euclidean norm and throughout this paper, $(\cdot,\cdot)$ refers to the corresponding inner product. We use the \emph{geodesic distance}, or great-circle distance, given by 
\begin{equation*}
d(x,y)=\arccos((x,y))
\end{equation*}
for $x,y \in \IS^2$ and denote by $\cB(\IS^2)$ the Borel $\sigma$-algebra on~$\IS^2$. The Lebesgue measure~$\dd A$ on the sphere is given by $\dd A=\sin(\theta)\dd \theta \dd\varphi$ with respect to spherical coordinates $\theta \in [0,\pi]$ and $\varphi \in [0,2\pi)$.

Let $L^2(\IS^2)$ denote the Hilbert space of square integrable functions. 		
The \emph{Laplace--Beltrami} operator on~$\IS^2$ is denoted by~$\Delta_{\IS^2}$. We define \emph{Sobolev spaces} with smoothness index $s \in \R^+$ via Bessel potentials by 	

\begin{equation*}
H^s(\IS^2)=\left(I -\Delta_{\IS^2}\right)^{-s/2}L^2(\IS^2).
\end{equation*}
The corresponding norm is given by 
\begin{equation*}
\|f\|_{H^s(\IS^2)}=\left\|\left(I-\Delta_{\IS^2}\right)^{s/2} f \right \|_{L^2(\IS^2)},
\end{equation*}
and for $s<0$, we define $H^s(\IS^2)$, as the space of distributions generated by
\begin{equation*}
H^s(\IS^2)= \left\{u=\left(I-\Delta_{\IS^2}\right)^{k}v, ~ v \in H^{2k+s}(\IS^2) \right\},
\end{equation*}
where $k \in \N$ is the smallest integer such that $2k+s>0$. In this case, the norm is given by 
\begin{equation*}
\|u\|_{H^s(\IS^2)} = \|v\|_{H^{2k+s}(\IS^2)}.
\end{equation*}
We set $H^0(\IS^2)=L^2(\IS^2)$. 
The reader is referred to~\cite{HLS18} and references therein for more details on Sobolev spaces defined using Bessel potentials. 

It is well known that the \emph{spherical harmonic functions}, denoted by $(Y_{l,m}, l \in \N_0, m=-l,\ldots,l)$, form an orthonormal basis for~$L^2(\IS^2)$ and that they are the eigenfunctions of the Laplace--Beltrami operator~$\Delta_{\IS^2}$. The corresponding eigenvalues are given by
\begin{equation*}
\Delta_{\IS^2} Y_{l,m} = - l(l+1) Y_{l,m}.
\end{equation*}

Let $(\Omega, \cF, \IP)$ be a complete probability space.
Similarly to~\cite{Lang2015}, we introduce a \emph{random field}~$Z$ on~$\IS^2$ as a $\cF \otimes \cB(\IS^2)$-measurable mapping $\gO \times \IS^2 \rightarrow \R$. The field is said to be \emph{isotropic} if the covariance function~$C$ only depends on the distance~$d$. In addition, the field is \emph{Gaussian} if it satisfies that $(Z(x_1),\ldots, Z(x_k))$ is multivariate Gaussian for any $k \in \N$ and $(x_1,\ldots,x_k) \in (\IS^2)^{ k}$. Without loss of generality, we assume that all considered fields are centered, i.e., $\E[Z] = 0$.

The field~$Z$ admits a basis expansion known as \emph{Karhunen--Lo\`eve expansion} with respect to the spherical harmonic functions
\begin{equation*}
Z(x) = \sum_{l=0}^\infty \sum_{m=-l}^l a_{l,m} Y_{l,m}(x).
\end{equation*}
Here $a_{l,m}= \int_{\IS^2}Z(y)\overline{Y_{l,m}}(y) \, \dd A(y)$ and the series expansion converges in $L^2(\Omega \times \IS^2;\R)$ and $L^2(\Omega;\R)$ for all $x \in \IS^2$.

Furthermore, there exists a sequence $(A_l, l\in \N_0)$ of nonnegative real numbers, known as the \emph{angular power spectrum}, such that for all pairs $l_1,l_2 \in \N_0$ and $m_i=-l_i,\ldots,l_i$, $i=1,2$, 
\begin{equation*}
\E[a_{l_1,m_1}\overline{a_{l_2,m_2}}]=A_{l_1} \delta_{l_1,l_2} \delta_{m_1 ,m_2},
\end{equation*}
where $\delta_{x,y}=1$ if $x=y$ and zero otherwise.
The random variables $a_{l,m}$ and $a_{l,-m}$ satisfy $a_{l,m}=(-1)^{m} \overline{a_{l,-m}}$ for $l \in \N$ and $m=1,\ldots,l$. 

Of importance in our SPDEs is the notion of spherical Gaussian white noise which is not a random field in~$L^2(\IS^2)$ but a so-called \emph{generalized random field} taking values in a larger space. More specifically, a \emph{Gaussian white noise}~$\cW$ on~$\IS^2$ is a centered Gaussian random field satisfying for any test functions $\phi, \psi \in L^2(\IS^2)$,
\begin{equation*}
\Cov\left((\cW,\phi )_{L^2(\IS^2)},(\cW,\psi )_{L^2(\IS^2)}\right)=(\phi,\psi)_{L^2(\IS^2)}.
\end{equation*} 	
Note that formally	$a_{l,m}=(\mathcal{W},Y_{l,m})_{L^2(\IS^2)}$ with 
\begin{equation*}
\Cov(a_{l_1,m_1},a_{l_2,m_2})
=(Y_{l_j,m_i},Y_{l_i,m_i})_{L^2(\IS^2)}=\delta_{l_j,l_i}\delta_{m_j,m_i}.
\end{equation*}
In other words we obtain
\begin{equation*}
A_{\mathcal{W},l} = \E[a_{l,m} \overline{a_{l,m}}]=1 
\end{equation*} 
and can as such formally view white noise as the field with angular power spectrum $A_{\mathcal{W},l}=1$ for all $l \in \N_0$, not converging in $L^2(\Omega;L^2(\IS^2))$.

Let us in what follows consider the class of isotropic Gaussian random fields generated by solutions to the fractional elliptic SPDE suggested in~\cite{LRL11}
\begin{equation}
\label{eq:theone}
\cL^{\beta} u= \left(\kappa^2-\Delta_{\IS^2}\right)^{\beta} u = \cW,
\end{equation}
where $\beta > 1/2$, $\kappa>0$, and $\cW$ denotes Gaussian white noise on the sphere.

Note that the solution~$u$ is an isotropic GRF satisfying
\begin{equation*}
u 
= \cL^{-\gb} \cW
= \sum_{l=0}^\infty \sum_{m=-l}^l a_{l,m} \cL^{-\gb} Y_{l,m}
= \sum_{l=0}^\infty \sum_{m=-l}^l a_{l,m} (\kappa^2+l(l+1))^{-\beta} Y_{l,m}
\end{equation*}
with angular power spectrum given by 
\begin{equation*}
A_l
=(\kappa^2+l(l+1))^{-2\beta}.
\end{equation*}
Since $\gb > 1/2$, the \KL expansion of $u$ converges in $L^2(\Omega;L^2(\IS^2))$ and the covariance operator is of trace-class with
\begin{align*}
\|u\|_{L^2(\gO;L^2(\IS^2))}^2
& = \E [ \|u\|_{L^2(\IS^2)}^2]
= \sum_{l=0}^\infty (2l+1) A_l
\le \int_0^\infty \frac{2x+1}{(\gk^2 + x(x+1))^{2\gb}} \, \dd x\\
& = \frac{\gk^{2(1-2\gb)}}{2\gb-1}
< + \infty.
\end{align*}

To give the reader an idea of the resulting random fields, we include two samples with respect to the same noise but different smoothness parameter~$\gb$ in Figure~\ref{fig:fields}.
\begin{figure}[tb]
	\centering
	\subfigure[$\gb = 0.75$ and $\gk = 1$.]{\includegraphics[width = 0.35\textwidth,trim={11cm 6.45cm 11cm 6.45cm},clip]{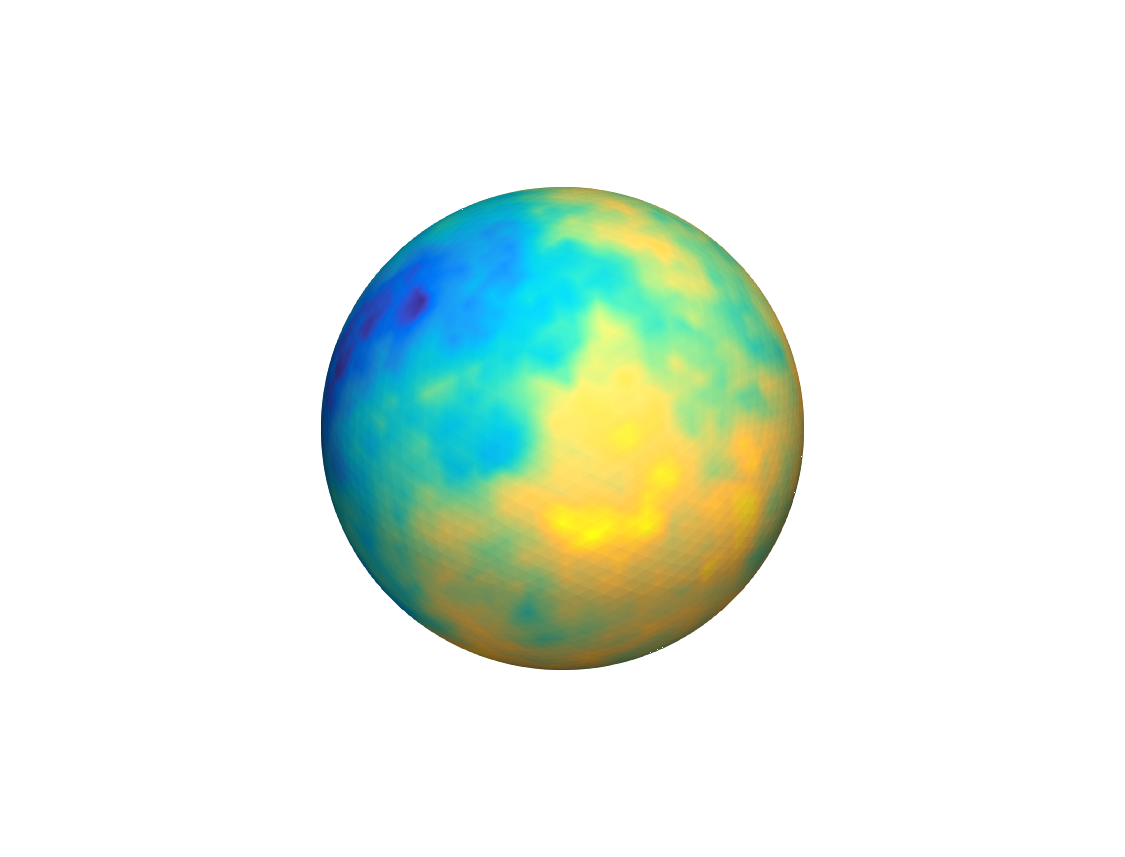}}
	\hspace*{5em}
	\subfigure[$\gb = 1.5$ and $\gk = 1$.]{\includegraphics[width = 0.35\textwidth,trim={11cm 6.45cm 11cm 6.45cm},clip]{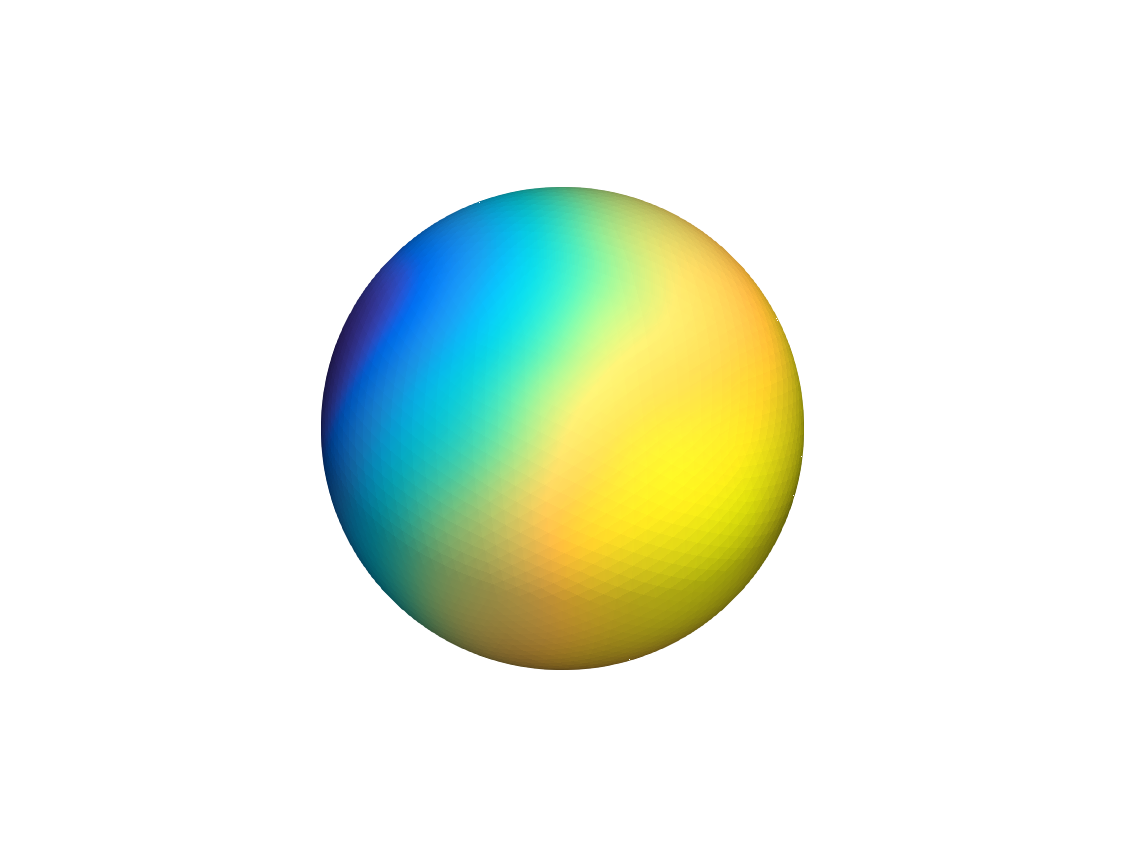}}
	\caption{Two Gaussian random field samples of solutions to~\eqref{eq:theone} generated using SFEM with the same noise but different values of the exponent~$\gb$. Here, the white noise expansion is truncated at $L=100$ and $h=0.051$.}
	\label{fig:fields}
\end{figure}
In order to obtain a finite-dimensional problem that is suitable for simulations and the approximation methods used in the following sections, let us consider the truncated white noise
\begin{equation*}
\cW_L 
=\sum_{l=0}^L \sum_{m=-l}^l a_{l,m} Y_{l,m} 
\end{equation*}
with $A_l = 1$ for all $l \le L$ and $A_l = 0$ otherwise, which satisfies that
\begin{equation}\label{eq:cov_W_L}
\|\cW_L\|_{L^2(\gO;L^2(\IS^2))}^2
= \sum_{l=0}^L (2l+1)
= (L+1)^2.
\end{equation}
The corresponding SPDE with smooth right hand side becomes
\begin{equation}\label{eq:smooth_SPDE}
\cL^{\beta} u_L
= \cW_L,
\end{equation}
where the solution~$u_L$ is an isotropic GRF with
\begin{equation}\label{eq:KL_uL}
u_L 
= \sum_{l=0}^L \sum_{m=-l}^l a_{l,m} (\kappa^2+l(l+1))^{-\beta} Y_{l,m}
\end{equation}
and
\begin{align}\label{eq:Cov_uL}
\begin{split}
\|u_L\|_{L^2(\gO;L^2(\IS^2))}^2
&= \sum_{l=0}^L (2l+1) A_l\le \int_0^L \frac{2x+1}{(\gk^2 + x(x+1))^{2\gb}} \, \dd x\\
&\le \frac{\gk^{2(1-2\gb)} - (\gk^2 + L(L+1))^{1-2\gb}}{2\gb-1}.
\end{split}
\end{align}

As a direct consequence of Proposition~5.2 in~\cite{Lang2015} we obtain the following result.
\begin{proposition}\label{prop:truncref}
	Let $u$ and $u_L$ be the solutions to~\eqref{eq:theone} and~\eqref{eq:smooth_SPDE}, respectively. Then there exists $C_\gk > 0$ such that for any $L \ge 1$,
	\begin{equation*}
	\|u-u_L\|_{L^2(\Omega;L^2(\IS^2))} 
	\leq C_{\gk}\left(\frac{1}{2\beta-1}+\frac{1}{4\beta-1}\right)L^{1-2\beta}.
	\end{equation*}
\end{proposition}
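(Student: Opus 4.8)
The plan is to compute the truncation error directly from the Karhunen--Lo\`eve expansions, exploiting the orthonormality of the spherical harmonics and the isotropy of the fields. Since $u = \sum_{l=0}^\infty \sum_{m=-l}^l a_{l,m}(\gk^2 + l(l+1))^{-\gb} Y_{l,m}$ and $u_L$ is the same sum truncated at level $L$, the difference is the tail $u - u_L = \sum_{l=L+1}^\infty \sum_{m=-l}^l a_{l,m}(\gk^2+l(l+1))^{-\gb} Y_{l,m}$. Using $\E[a_{l_1,m_1}\overline{a_{l_2,m_2}}] = A_{\cW,l_1}\delta_{l_1,l_2}\delta_{m_1,m_2} = \delta_{l_1,l_2}\delta_{m_1,m_2}$ and the $L^2(\IS^2)$-orthonormality of the $Y_{l,m}$, this gives
\begin{equation*}
\|u-u_L\|_{L^2(\Omega;L^2(\IS^2))}^2 = \sum_{l=L+1}^\infty (2l+1)(\gk^2+l(l+1))^{-2\gb} = \sum_{l=L+1}^\infty (2l+1) A_l.
\end{equation*}

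Next I would bound this tail sum by an integral comparison, exactly as in the trace-class computation preceding the statement: since $x \mapsto (2x+1)(\gk^2 + x(x+1))^{-2\gb}$ is eventually decreasing, one gets $\sum_{l=L+1}^\infty (2l+1)(\gk^2+l(l+1))^{-2\gb} \le \int_L^\infty \frac{2x+1}{(\gk^2+x(x+1))^{2\gb}}\dd x = \frac{(\gk^2+L(L+1))^{1-2\gb}}{2\gb-1}$. Since $\gk^2 + L(L+1) \ge L(L+1) \ge L^2$ and $1-2\gb < 0$, this is bounded by $\frac{1}{2\gb-1}(L^2)^{1-2\gb} = \frac{1}{2\gb-1}L^{2-4\gb}$, so $\|u-u_L\|_{L^2(\Omega;L^2(\IS^2))} \le (2\gb-1)^{-1/2}L^{1-2\gb}$. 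This already yields a bound of the claimed order; matching the precise constant $C_\gk\big(\frac{1}{2\gb-1} + \frac{1}{4\gb-1}\big)$ in the statement is where one invokes Proposition~5.2 of~\cite{Lang2015} directly rather than redoing the estimate by hand — that reference presumably tracks the constant through a slightly sharper comparison (e.g.\ splitting off the $l=L+1$ term or comparing against $\gk^{-2}\sum l^{-4\gb}$ to produce the $\frac{1}{4\beta-1}$ contribution and a $\gk$-dependent prefactor).

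The only genuinely delicate point is justifying the monotonicity needed for the integral comparison uniformly in $L \ge 1$: the integrand $g(x) = (2x+1)(\gk^2+x(x+1))^{-2\gb}$ need not be decreasing near $x=0$ for small $\gk$, but since $\gb > 1/2$ one checks $g'(x) < 0$ for $x$ larger than some $x_0(\gk,\gb)$, and for $L \ge 1$ one absorbs any finitely many initial terms into the constant $C_\gk$. This is precisely the bookkeeping that Proposition~5.2 in~\cite{Lang2015} carries out, so the cleanest route — and the one the phrase ``as a direct consequence'' signals — is to quote it, identifying the angular power spectrum $A_l = (\gk^2+l(l+1))^{-2\gb}$ here with the decay rate assumed there and reading off the stated constant and exponent.
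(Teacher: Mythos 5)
Your proposal is correct and takes essentially the same route as the paper: the paper offers no standalone argument but obtains the bound as a direct consequence of Proposition~5.2 in~\cite{Lang2015} applied to the angular power spectrum $A_l=(\kappa^2+l(l+1))^{-2\beta}$, which is exactly the conclusion you reach. Your supplementary tail computation via the Karhunen--Lo\`eve expansion and the integral comparison correctly recovers the rate $L^{1-2\beta}$, and you rightly identify that the specific constant $\frac{1}{2\beta-1}+\frac{1}{4\beta-1}$ is simply read off from the cited result rather than rederived.
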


Having obtained a first spectral approximation and its speed of convergence, we continue with rewriting \eqref{eq:smooth_SPDE} for $\gb > 1$ as a system of SPDEs suitable for finite element methods.

For $\gb > 1$ let $\floor{\gb}$ denote the integer part of~$\gb$ and $\{\beta\}=\beta-\floor{\beta}$ its fractional, i.e., $\floor{\beta} \in \N_0$ and $\{\beta\}\in [0,1)$. For $\{\gb\} \neq 0$, we rewrite~\eqref{eq:smooth_SPDE} as a system of equations given by the recursion
\begin{equation}\label{eq:system_SPDE}
\cL u_L^i = u_L^{i-1}
\end{equation}
for $i=1,\ldots, \floor{\gb}$ with $u_L^0 = \cW_L$ and
\begin{equation}\label{eq:system_fractional_SPDE}
\cL^{\{\gb \}} u_L = u_L^{\floor{\gb}}.
\end{equation}
For $\{\gb \} = 0$, we are in the non-fractional setting and set $u_L = u_L^{\floor{\gb}}$.

We observe that
\begin{equation*}
u_L^i = \cL^{-i} \cW_L
\end{equation*}
and therefore by~\eqref{eq:KL_uL} and~\eqref{eq:Cov_uL} for $i \ge 1$,
\begin{align}\label{eq:Cov_uLi}
\begin{split}
\|u_L^i\|_{L^2(\Omega;L^2(\IS^2))} 
&\leq \left(\frac{\kappa^{2(1-2i)}-(\kappa^2+L(L+1))^{1-2i}}{2i-1}\right)^{1/2}\\
&\leq \kappa^{1-2i} (2i-1)^{-1/2} 
< + \infty.
\end{split}
\end{align}

The recursion scheme allows us to approximate solutions to the fractional problem. First, we can use SFEM for~$\cL$ to approximate solutions to the first $\floor{\beta}$ non-fractional SPDEs recursively. We emphasize that this is an advantage compared to approximating $\cL^{\floor{\gb}}$ directly since higher order operators would require higher order conforming finite element spaces. In the final step, we approximate the fractional operator in such a way that even the solution to the last problem in the recursion can be approximated using SFEM.

\section{Approximation of fractional operators}
\label{seq:three}
In order to develop a finite element approximation of~\eqref{eq:theone}, we approximate the fractional operator in the last step of the recursion~\eqref{eq:system_fractional_SPDE} by a quadrature. 
By~\cite[Theorem 2.1]{Bonito2013}, we can write the inverse of the fractional operator~$\cL^{\{\gb\}}$ as a Dunford--Taylor integral
\begin{equation}
\label{eq:bala2}
\mathcal{L}^{-\{\beta\}} =\frac{\sin( \pi \fbet)}{\pi}\int_{-\infty}^{\infty} e^{2 \fbet y} \left( I+e^{2y}\mathcal{L}\right)^{-1} \, \dd y.
\end{equation}
We partition the range of~$y$ into an equidistant grid with step size~$k$, and following~\cite{Bonito2013} approximate the integral in~\eqref{eq:bala2} using a sinc quadrature, thus obtaining 
\begin{equation*}
\mathcal{L}^{-\fbet} \approx Q^{\fbet}_k= \frac{2 k \sin(\pi \fbet)}{\pi} \sum_{l=-K^{-}}^{K^+} e^{2\fbet y_{l}} \left(I+e^{2 y_l}\mathcal{L}\right)^{-1}
\end{equation*}
with $y_{l}=kl$. Furthermore,
\begin{equation*}
K^+=\ceil*{\frac{\pi^2}{4(1-\fbet)k^2}},
\qquad
K^{-}=\ceil*{\frac{\pi^2}{4\fbet k^2}},
\end{equation*}
where $\ceil*{\cdot}$ denotes rounding up to the closest integer.

We approximate the solution to Equation~\eqref{eq:system_fractional_SPDE} by
\begin{equation} \label{eq:diff}
u_{L,Q,k}= Q^{\fbet}_k u_L^{\floor{\beta}}=\frac{2 k \sin(\pi \fbet)}{\pi} \sum_{l=-K^{-}}^{K^+} e^{2\fbet y_{l}} \left(I+e^{2 y_l}\mathcal{L}\right)^{-1} u_L^{\floor{\beta}},
\end{equation}
where the expressions $(I+e^{2 y_l}\mathcal{L})^{-1} u_L^{\floor{\beta}}$ on the right hand side are obtained by solving the subproblems
\begin{equation}\label{eq:SPDE_subproblems}
u_l+e^{2y_l}\mathcal{L}u_l
=\left(1+e^{2y_l}\kappa^2\right)u_l-e^{2y_l}\Delta_{\IS^2} u_l
=u_L^{\floor{\beta}}.
\end{equation}

We bound the error between $u_L=\cL^{-\fbet} u^{\floor{\beta}}_L$ and $u_{L,Q,k}$ by employing the analysis of the exponentially convergent sinc quadrature approximation of~\eqref{eq:bala2} developed in~\cite{Bonito2013}. The following proposition is an application of \cite[Theorem~3.5]{Bonito2013} to the setting of this paper.

\begin{proposition}
	Let $\gb >1/2$ with $\{\gb\} \neq 0$. Further, let $u_L$ be given by~\eqref{eq:system_fractional_SPDE} and $u_{L,Q,k}$ by~\eqref{eq:diff}. The error is then bounded for any finite $L>0$ by 
	\begin{align*}
	&\|u_L-u_{L,Q,k}\|_{L^2(\Omega;L^2(\IS^2))}\\
	&\qquad \leq \frac{2 \sin(\pi\fbet)}{\pi}\left(\frac{1}{2\fbet}+\frac{1}{\kappa^2(2-2\fbet)}\right)\left(\frac{e^{-\pi^2/(4k)}}{\sinh(\pi^2/(4k))}+e^{-\pi^2/(2k)}\right)\\
	& \hspace*{10em} 
	\times \|u^{\floor{\beta}}_L\|_{L^2(\Omega;L^2(\IS^2))}\\
	& \qquad \le \frac{2 \sin(\pi\fbet)}{\pi}\left(\frac{1}{2\fbet}+\frac{1}{\kappa^2(2-2\fbet)}\right)\left(\frac{e^{-\pi^2/(4k)}}{\sinh(\pi^2/(4k))}+e^{-\pi^2/(2k)}\right)\\
	& \hspace*{10em} 
	\times \left( \delta_{0 ,\floor{\beta}}(L+1) + (1-\delta_{0 ,\floor{\beta}}) \frac{\gk^{1-2\floor{\beta}}}{(2\floor{\beta}-1)^{1/2}} \right)\\
	& \qquad = c_1(k, L, \gb),
	\end{align*}
	where the right hand side $c_1(k, L, \gb)$ is exponentially decaying in~$k$.
	\label{prop:quaderr}
\end{proposition}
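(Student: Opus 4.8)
The plan is to directly reduce the statement to the cited result \cite[Theorem~3.5]{Bonito2013}, which controls the sinc quadrature error for the Dunford--Taylor representation~\eqref{eq:bala2} in terms of the operator norm of the discrepancy $\cL^{-\fbet} - Q^{\fbet}_k$ applied to the right-hand side. The first step is to recall that in the abstract framework of \cite{Bonito2013}, the operator $\cL = \kappa^2 - \Delta_{\IS^2}$ is self-adjoint, positive definite on $L^2(\IS^2)$ with spectrum bounded below by $\lambda_{\min} = \kappa^2$ (attained on the constant spherical harmonic $Y_{0,0}$). The key quantitative inputs to the Bonito--Pasciak bound are exactly this lower spectral bound $\kappa^2$ and the exponent $\fbet \in (0,1)$; feeding $\lambda_{\min} = \kappa^2$ into their estimate produces the constant $\tfrac{1}{2\fbet} + \tfrac{1}{\kappa^2(2-2\fbet)}$ together with the exponential factor $\tfrac{e^{-\pi^2/(4k)}}{\sinh(\pi^2/(4k))} + e^{-\pi^2/(2k)}$, and the prefactor $\tfrac{2\sin(\pi\fbet)}{\pi}$ coming from the normalization in~\eqref{eq:bala2}.

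Next I would deal with the stochastic nature of the right-hand side. Since $Q^{\fbet}_k$ is a deterministic bounded linear operator on $L^2(\IS^2)$ and $\cL^{-\fbet}$ likewise, the difference $\cL^{-\fbet} - Q^{\fbet}_k$ is a deterministic operator, so applying it to the random field $u_L^{\floor{\beta}} \in L^2(\Omega;L^2(\IS^2))$ and taking the $L^2(\Omega;L^2(\IS^2))$ norm gives
\begin{equation*}
\|u_L - u_{L,Q,k}\|_{L^2(\Omega;L^2(\IS^2))}
= \|(\cL^{-\fbet} - Q^{\fbet}_k) u_L^{\floor{\beta}}\|_{L^2(\Omega;L^2(\IS^2))}
\le \|\cL^{-\fbet} - Q^{\fbet}_k\|_{\mathcal{L}(L^2(\IS^2))}\, \|u_L^{\floor{\beta}}\|_{L^2(\Omega;L^2(\IS^2))},
\end{equation*}
by pulling the operator norm out of the expectation via Fubini/Tonelli. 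This yields the first inequality in the proposition once the operator-norm factor is identified with the Bonito--Pasciak bound from the previous step.

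For the second (fully explicit) inequality, I would bound $\|u_L^{\floor{\beta}}\|_{L^2(\Omega;L^2(\IS^2))}$ using the two cases already recorded in the excerpt. When $\floor{\beta} = 0$ (i.e.\ $1/2 < \gb < 1$), the recursion has not been applied and $u_L^{\floor{\beta}} = \cW_L$, so by~\eqref{eq:cov_W_L} the norm is $(L+1)$; when $\floor{\beta} \ge 1$, estimate~\eqref{eq:Cov_uLi} gives $\|u_L^{\floor{\beta}}\|_{L^2(\Omega;L^2(\IS^2))} \le \kappa^{1-2\floor{\beta}}(2\floor{\beta}-1)^{-1/2}$. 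Combining these two cases via the Kronecker delta $\delta_{0,\floor{\beta}}$ produces the stated bracket $\delta_{0,\floor{\beta}}(L+1) + (1-\delta_{0,\floor{\beta}})\kappa^{1-2\floor{\beta}}(2\floor{\beta}-1)^{-1/2}$, and we define $c_1(k,L,\gb)$ to be the resulting expression. Finally, exponential decay in $k$ is immediate: as $k \to 0^+$ both $\tfrac{e^{-\pi^2/(4k)}}{\sinh(\pi^2/(4k))} \sim 2 e^{-\pi^2/(2k)}$ and $e^{-\pi^2/(2k)}$ vanish faster than any power of $k$, while all other factors are independent of or polynomial in the data.

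The only genuine obstacle is the \emph{bookkeeping of constants}: one must check that the spectral lower bound used in \cite[Theorem~3.5]{Bonito2013} is indeed $\kappa^2$ rather than, say, $1 + \kappa^2$ or the first nonzero Laplace--Beltrami eigenvalue, and that the normalization conventions (the $\sin(\pi\fbet)/\pi$ prefactor, the scaling $e^{2y}\cL$ versus $e^{y}\cL$ inside the resolvent) match between~\eqref{eq:bala2} and the reference, so that the stated constant $\tfrac{1}{2\fbet} + \tfrac{1}{\kappa^2(2-2\fbet)}$ comes out exactly. Everything else is a routine transcription of the cited theorem into the present notation together with the already-established moment bounds~\eqref{eq:cov_W_L} and~\eqref{eq:Cov_uLi}.
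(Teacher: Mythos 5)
Your proposal is correct and follows essentially the same route as the paper, which likewise omits the details and simply notes that the result follows from \cite[Theorem~3.5]{Bonito2013} using the fact that the largest eigenvalue of $\cL^{-1}$ is $\kappa^{-2}$ (equivalently, the spectral lower bound $\kappa^2$ you identify) and then taking the $L^2(\Omega;L^2(\IS^2))$ norm, with the second inequality supplied by~\eqref{eq:cov_W_L} and~\eqref{eq:Cov_uLi}. The only point the paper adds that you leave implicit is the remark that the finite-dimensional setting of \cite{Bonito2013} applies because the truncated noise confines the problem to the span of the first $L$ eigenspaces of~$\cL$.
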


We remark that the theorem as given in~\cite{Bonito2013} is also valid for $\gb <1/2$ which is not of relevance in the context of this paper.

Since the proposition follows by first noting that the largest eigenvalue of~$\cL^{-1}$ is given by~$\kappa^{-2}$ and then applying the definition of the $L^2(\Omega;L^2(\IS^2))$ norm to the estimate in~\cite[Theorem 3.5]{Bonito2013}, we omit the proof. We note that the finite-dimensional setting of~\cite{Bonito2013} applies since the truncated \KL series of~$\cW$ leads to an SPDE on the finite-dimensional subspace of~$L^2(\IS^2)$ spanned by the spherical harmonics of the first $L$~eigenvalues of~$\cL$.

We observe that in simulations with a coarse mesh size~$k$ and a small correlation length parameter~$\kappa$, the constant $c_1(k,L,\gb)$ can become very large even though it decays exponentially as $k \to 0$. This is due to the fact that the smallest eigenvalue of the operator~$\cL$ goes to zero as $\kappa \to 0$. This problem can be remedied by refining the quadrature with a smaller~$k$.

\section{SFEM approximation and its strong convergence}
\label{seq:four}
Having approximated the noise and the fractional operator in the previous sections, it remains to approximate solutions to the linear subproblems~\eqref{eq:system_SPDE} and~\eqref{eq:SPDE_subproblems} appearing in the recursion and sinc quadrature.

The weak formulation of~\eqref{eq:system_SPDE} is given by:
Find $u_L^i \in H^1(\IS^2)$ such that 
\begin{equation}\label{eq:weak_regular_SPDE}
\mathfrak{a}_{\IS^2}(u_L^i,v)=(u_L^{i-1},v)_{L^2(\IS^2)}
\end{equation}
for every $v \in H^1(\IS^2)$, where the bilinear form $\mathfrak{a}_{\IS^2}:H^1(\IS^2) \times H^1(\IS^2) \to \R$, is given by 
\begin{equation}\label{eq:bilinear_form}
\mathfrak{a}_{\IS^2}(u,v)=\kappa^2\left(u,v\right)_{L^2(\IS^2)} + \left( \nabla_{\IS^2} u ,\nabla_{\IS^2}v \right)_{L^2(\IS^2)}.
\end{equation}

This bilinear form is obtained by integration by parts, where this particular expression is obtained due to the compactness of the sphere \cite[Theorem 2.10, Theorem 2.14]{Dziuk2013}. Note furthermore that the bilinear form is coercive and continuous, thus implying the existence of solutions by virtue of the Lax--Milgram theorem.

Likewise, the weak formulation of~\eqref{eq:SPDE_subproblems} is given by:
Find $u \in H^1(\IS^2)$ such that 
\begin{equation} \label{eq:weak_SPDE}
\mathfrak{a}_{\IS^2,l}(u,v)=( u^{\floor{\beta}}_L,v)_{L^2(\IS^2)}
\end{equation}
for every $v \in H^1(\IS^2)$,
where the bilinear form $\mathfrak{a}_{\IS^2,l}: H^1(\IS^2) \times H^1(\IS^2) \to \R$, is given by 
\begin{equation}
\label{eq:subprobbiform}
\mathfrak{a}_{\IS^2,l}(u,v)=\left(1+e^{2y_l}\kappa^2 \right) \left( u, v\right)_{L^2(\IS^2)} +e^{2y_l} \left( \nabla_{\IS^2} u ,\nabla_{\IS^2}v \right)_{L^2(\IS^2)}.
\end{equation}

We approximate the solutions to these problems by using the surface finite element method of~\cite{Dziuk2013}. In what follows we describe SFEM in the particular case of the sphere for the completeness of our presentation.

By $\IS_h^2$ we denote an approximation of~$\IS^2$ with a piecewise polygonal surface consisting of non-degenerate triangles with vertices on~$\IS^2$, where $h$ refers to the size of the largest triangle, which is defined as the in-ball radius.

For two triangles $T$ and~$\widetilde{T}$, it holds that either $\widetilde{T} \cap T=\emptyset$ or that their intersection is their common edge or vertex. Let us denote by $\mathcal{T}_h$ the set of triangles making up the discretized sphere~$\IS^2_h$, i.e.,
\begin{equation*}
\IS^2_h=\bigcup_{T_j \in \mathcal{T}_h} T_j.
\end{equation*}
To give an impression of the resulting geometry, we visualize one discretized sphere and a possible refinement in Figure~\ref{fig:discr_sphere}.
\begin{figure}[tb]
	\subfigure[Discretized sphere.]{\includegraphics[width=0.45\textwidth]{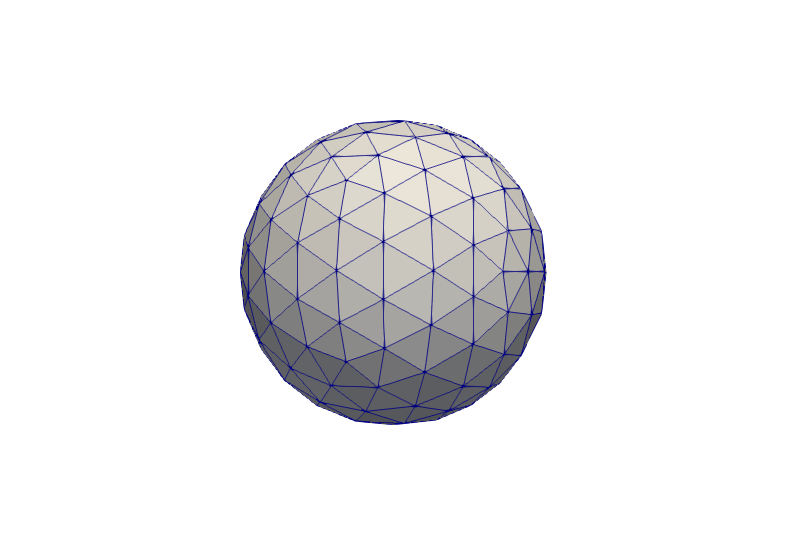}}
	\subfigure[Refinement of the sphere.]{\includegraphics[width=0.45\textwidth]{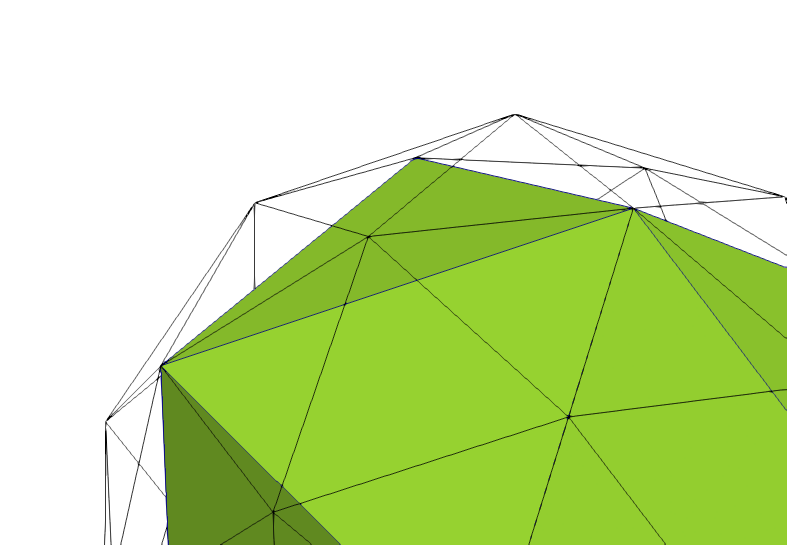}}
	\caption{Discretized polygonal approximation of the sphere.\label{fig:discr_sphere}}
\end{figure}

The \emph{signed distance function} $d_s: \R^3 \to \R$ to~$\IS^2$ is given by 
\begin{equation*}
d_s(x)=\|x\|-1
\end{equation*}
for $x$ both outside and inside of the sphere. As such, it can take both negative and positive values, warranting the name \emph{signed} distance function.

By~\cite{Dziuk2013}, $d_s$ is smooth and for $U= \left\{ x \in \R^3: \left|d_s(x)\right|< 1\right\} \supset \IS^2$, the projection $p:U \rightarrow\IS^2$ given by
\begin{equation*}
p(x)=x-d_s(x) \nu(x) 
\end{equation*} 
is onto, where $\nu$ denotes the outward normal on~$\IS^2$. Restricted to $\IS_h^2 \subset U$, $p: \IS_h^2 \rightarrow \IS^2$ becomes an isomorphism. Therefore, a function $\eta: \IS_h^2 \rightarrow \R$ may be lifted to~$\IS^2$ by setting
\begin{equation*}
\eta^\ell = \eta \circ p^{-1},
\end{equation*}
where we emphasize that $\ell$ is used as abbreviation for the \emph{lift} and should not be understood as a parameter.

For every $T \in \mathcal{T}_h$, we define a lifted triangle $T^\ell \subset \IS^2$ by $T^\ell=p(T)$. The procedure is illustrated in Figure~\ref{fig:lift} in the one-dimensional setting. Note that the points on the discretized surface are \emph{lifted} along the normal of the surface. This pointwise evaluation allows us to define $\eta^\ell$, since $\eta$ is evaluated on its original domain.
\begin{figure}[htb]
	\centering
	\begin{tikzpicture}[x=0.75pt,y=0.75pt,yscale=-1,xscale=1]
	
	\draw  [draw opacity=0] (164.75,124.77) .. controls (177.87,68.56) and (241.13,25.76) .. (317.37,25.44) .. controls (395.98,25.11) and (461.16,70.05) .. (471.95,128.74) -- (317.88,146.19) -- cycle ; \draw   (164.75,124.77) .. controls (177.87,68.56) and (241.13,25.76) .. (317.37,25.44) .. controls (395.98,25.11) and (461.16,70.05) .. (471.95,128.74) ;
	\draw  [dash pattern={on 4.5pt off 4.5pt}]  (164.75,124.77) -- (153.5,175) ;
	\draw  [dash pattern={on 4.5pt off 4.5pt}]  (471.95,128.74) -- (484.5,182) ;
	\draw    (166.73,123.51) -- (318.52,26.27) ;
	\draw [shift={(320.5,25)}, rotate = 327.36] [color={rgb, 255:red, 0; green, 0; blue, 0 }  ][line width=0.75]      (0, 0) circle [x radius= 3.35, y radius= 3.35]   ;
	\draw [shift={(164.75,124.77)}, rotate = 327.36] [color={rgb, 255:red, 0; green, 0; blue, 0 }  ][line width=0.75]      (0, 0) circle [x radius= 3.35, y radius= 3.35]   ;
	\draw    (322.44,26.33) -- (351.39,46.16) -- (470.01,127.41) ;
	\draw [shift={(471.95,128.74)}, rotate = 34.41] [color={rgb, 255:red, 0; green, 0; blue, 0 }  ][line width=0.75]      (0, 0) circle [x radius= 3.35, y radius= 3.35]   ;
	\draw [shift={(320.5,25)}, rotate = 34.41] [color={rgb, 255:red, 0; green, 0; blue, 0 }  ][line width=0.75]      (0, 0) circle [x radius= 3.35, y radius= 3.35]   ;
	\draw    (463.5,102) -- (487.71,89.89) ;
	\draw [shift={(489.5,89)}, rotate = 513.4300000000001] [color={rgb, 255:red, 0; green, 0; blue, 0 }  ][line width=0.75]    (10.93,-3.29) .. controls (6.95,-1.4) and (3.31,-0.3) .. (0,0) .. controls (3.31,0.3) and (6.95,1.4) .. (10.93,3.29)   ;
	\draw    (396.22,76.87) -- (412.29,55.6) ;
	\draw [shift={(413.5,54)}, rotate = 487.07] [color={rgb, 255:red, 0; green, 0; blue, 0 }  ][line width=0.75]    (10.93,-3.29) .. controls (6.95,-1.4) and (3.31,-0.3) .. (0,0) .. controls (3.31,0.3) and (6.95,1.4) .. (10.93,3.29)   ;
	\draw  [dash pattern={on 0.84pt off 2.51pt}]  (444.5,112) -- (466.12,99.57) ;
	\draw [shift={(455.31,105.79)}, rotate = 510.11] [color={rgb, 255:red, 0; green, 0; blue, 0 }  ][line width=0.75]    (6.56,-1.97) .. controls (4.17,-0.84) and (1.99,-0.18) .. (0,0) .. controls (1.99,0.18) and (4.17,0.84) .. (6.56,1.97)   ;
	\draw  [fill={rgb, 255:red, 0; green, 0; blue, 0 }  ,fill opacity=1 ] (441.5,112) .. controls (441.5,110.34) and (442.84,109) .. (444.5,109) .. controls (446.16,109) and (447.5,110.34) .. (447.5,112) .. controls (447.5,113.66) and (446.16,115) .. (444.5,115) .. controls (442.84,115) and (441.5,113.66) .. (441.5,112) -- cycle ;
	\draw  [fill={rgb, 255:red, 0; green, 0; blue, 0 }  ,fill opacity=1 ] (461.22,98.85) .. controls (459.67,100.01) and (459.35,102.2) .. (460.5,103.75) .. controls (461.65,105.3) and (463.85,105.62) .. (465.4,104.47) .. controls (466.95,103.32) and (467.27,101.12) .. (466.12,99.57) .. controls (464.96,98.02) and (462.77,97.7) .. (461.22,98.85) -- cycle ;
	
	\draw (465,68.4) node [anchor=north west][inner sep=0.75pt]    {$\nu $};
	\draw (380,44.4) node [anchor=north west][inner sep=0.75pt]    {$\nu _{h}$};
	\draw (422,113.4) node [anchor=north west][inner sep=0.75pt]  [font=\scriptsize]  {$x$};
	\draw (480,104.4) node [anchor=north west][inner sep=0.75pt]  [font=\scriptsize]  {$p( x)$};
	\draw (229,85.4) node [anchor=north west][inner sep=0.75pt]    {$\IS^{1}_{h}$};
	\draw (369,7.4) node [anchor=north west][inner sep=0.75pt]    {$\IS^{1}$};
	\end{tikzpicture}
	\caption{One dimensional illustration of the lift.}
	\label{fig:lift}
\end{figure}
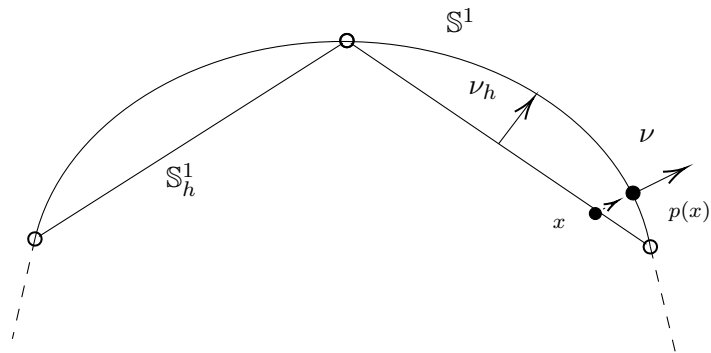

In order to be able to discretize problems defined on~$\IS^2_h$, define the finite element space
\begin{equation*}
S_h=\left\{\phi_h \in C^0(\IS^2_h): \phi_h|_T \in \mathcal{P}^1(T), T \in \mathcal{T}_h\right\} \subset H^1(\IS^2_h),
\end{equation*}
where $\mathcal{P}^1(T)$ denotes the space of all polynomials of degree at most one.
The lifted finite element space is given by 
\begin{equation*}
S_h^\ell=\left\{\varphi_h =\phi_h^\ell: \phi_h \in S_h \right\} \subset H^1(\IS^2). 
\end{equation*}

The \emph{tangential gradient} $\nabla_{\IS^2_h} \eta$ of a function $\eta: \IS^2_h \to \R$ is defined in a pointwise sense by 
\begin{equation*}
\nabla_{\IS^2_h} \eta(x)=\left(\delta_{i,j}-\nu_{h,i}\nu_{h,j}\right)(I-d_s(x)H_s(x))\nabla_{\IS^2}\eta^{\ell}(p(x)),
\end{equation*}
where $\nu_{h,i}$ denotes the outward normal of the $i$-th triangle~$T_i$
and $H_s$ is the Hessian of the signed distance function~$d_s$, which is given by 
\begin{align*}
H_s(x)= \frac{1}{\|x\|} \begin{bmatrix}1-\frac{x_1^2}{\sqrt{\|x\|}} &-\frac{x_1x_2}{\sqrt{\|x\|}} &-\frac{x_1x_3}{\sqrt{\|x\|}} \\ -\frac{x_1x_2}{\sqrt{\|x\|}} &1-\frac{x_2^2}{\sqrt{\|x\|}} &-\frac{x_2x_3}{\sqrt{\|x\|}} \\ -\frac{x_1x_3}{\sqrt{\|x\|}} &-\frac{x_2x_3}{\sqrt{\|x\|}} &1-\frac{x_3^2}{\sqrt{\|x\|}}\end{bmatrix}.
\end{align*}

Given this short introduction to SFEM on the sphere, we are now ready to formulate the discretized problems used in the recursion. Since they are linear elliptic SPDEs, the method in~\cite{Dziuk2013}
can be used. We define the bilinear forms on~$\IS_h^2$ corresponding to~\eqref{eq:bilinear_form} and~\eqref{eq:subprobbiform} by 
\begin{equation*}
\mathfrak{a}_{\IS^2_h}(u,v)
=\kappa^2(u,v)_{L^2(\IS^2_h)}
+ \left( \nabla_{\IS^2_h} u ,\nabla_{\IS^2_h}v \right)_{L^2(\IS^2_h)}
\end{equation*}
and
\begin{equation*}
\mathfrak{a}_{\IS^2_h,l}(u,v)
=\left(1+e^{2y_l}\kappa^2 \right) \left( u, v\right)_{L^2(\IS^2_h)} 
+e^{2y_l} \left( \nabla_{\IS^2_h} u ,\nabla_{\IS^2_h}v \right)_{L^2(\IS^2_h)}
\end{equation*}
for $u,v \in H^1(\IS^2_h)$, respectively.

The weak formulations of~\eqref{eq:weak_regular_SPDE} and~\eqref{eq:weak_SPDE} on the discretized sphere are hence given by:
Find $\tilde{u}_{L,h}^{i} \in H^1(\IS^2_h)$ such that 
\begin{equation}
\mathfrak{a}_{\IS_h^2}(\tilde{u}_{L,h}^{i},v)=(\tilde{u}^{i-1}_{L,h},v)_{L^2(\IS_h^2)}
\label{eq:linproblem}
\end{equation}
for all $v \in H^1(\IS^2_h)$. 
And similarly:
Find $\tilde{u}_{L,h} \in H^1(\IS^2_h)$ such that 
\begin{equation}
\mathfrak{a}_{\IS_h^2,l}(\tilde{u}_{L,h},v)=(\tilde{u}^{\floor{\beta}}_{L,h},v )_{L^2(\IS_h^2)}
\label{eq:subproblem}
\end{equation}
for all $v \in H^1(\IS^2_h)$. 

Here, $\tilde{u}_{L,h}^0 =\cW_{L,h}$ will denote an approximation of the white noise on the discretized sphere.
One way to obtain this is to lift an approximation of the truncated white noise~$\cW_L$ on~$\IS_h^2$ to the sphere.

There are different methods to obtain $\cW_{L,h}$ and its corresponding lift~$\cW_{L,h}^\ell$. One possibility is to use interpolation as done in~\cite[Lemma 4.3]{Dziuk2013}. To this end, let $u$ be any function in~$H^2(\IS^2)$.
Denote the $N$ nodes of $\IS^2_h$ by $(x_1,\ldots,x_N)$. For every $T \in \mathcal{T}_h$, it holds that the nodes lie on~$\IS^2$. We construct $\tilde{I_h} u \in S_h \subset H^1(\IS^2_h)$ by first setting
\begin{equation*}
\tilde{I}_h u (x_i)=u(x_i),
\end{equation*}
and then performing linear interpolation using the basis functions of~$S_h$. 
Define $I_h: H^2(\IS^2) \to S_h^\ell \subset H^1(\IS^2)$ by lifting the interpolated function $\tilde{I}_h u \in S_h$ to~$\IS^2$, that is to say, $\cW_{L,h}= \tilde{I}_h \cW_L$ and $\cW_{L,h}^\ell = I_h \cW_L$. By adapting 
\cite[Lemma 4.3]{Dziuk2013}, it is straightforward to show that 
\begin{equation}\label{eq:W_Lh_bound}
\|\cW_L- \cW_{L,h}^\ell \|_{{L^2(\gO;L^2(\IS^2))}} 
\leq \sqrt{2} ch^2 \|\cW_L\|_{{L^2(\gO;H^2(\IS^2))}}.
\end{equation}
We observe that 
\begin{align}\label{eq:W_L_H2_bound}
\begin{split}
\|\cW_L\|_{L^2(\Omega;H^2(\IS^2))}^2
& =\E\left[\|(I-\Delta_{\IS^2})\cW_L\|_{L^2(\IS^2)}^2\right]\\
& = \sum_{l=0}^L (1+l(l+1))^2(2l+1)
\le C (L+1)^6,
\end{split}
\end{align}
where the last bound follows from Faulhaber's formula.
This yields
\begin{equation*}
\|\cW_L- \cW_{L,h}^\ell \|_{{L^2(\gO;L^2(\IS^2))}} 
\leq Ch^2 (L+1)^3
\end{equation*}
for some constant~$C$.

One of the perks of the interpolation approach is that we manage to deal with the geometric error stemming from the discretization of~$\IS^2$, but a drawback is that the factor $\|\cW_L\|_{L^2(\gO;H^2(\IS^2))}$ will grow cubically in~$L$ due to the high regularity assumptions.

Another way to obtain $\cW_{L,h}^\ell$ is to use an orthogonal projection of $\cW_L$ onto $S_h^\ell$. 
It is done by finding $P_h \cW_L \in S_h^\ell$ such that $(\cW_L-P_h \cW_L, v)_{L^2(\IS^2)}=0$ for all $v \in S_h^\ell$.
This equation yields a system of equations for the coefficients of the lift of the nodal basis of~$S_h^\ell$. By solving this system, we obtain $P_h \cW_L =\cW_{L,h}^\ell$ Since for any $v \in S_h^\ell$, it holds that $(\cW_L-P_h \cW_L,v)_{L^2(\IS^2)}=0$,
\begin{align*}
\|\cW_L-P_h \cW_L\|_{L^2(\IS^2)}^2
&= (\cW_L-P_h \cW_L,\cW_L-v)_{L^2(\IS^2)}\\
&\leq \|\cW_L-P_h\cW_L\|_{L^2(\IS^2)}\|\cW_L-v\|_{L^2(\IS^2)}.
\end{align*}
What remains to do is to choose $v \in S_h^\ell$. If we choose $v=0$, we have that 
\begin{equation*}
\|\cW_L-P_h \cW_L\|_{L^2(\Omega;L^2(\IS^2)} \leq \|\cW_L\|_{L^2(\Omega;L^2(\IS^2))}.
\end{equation*}
If we instead let $v=I_h \cW_L$, we obtain from~\eqref{eq:W_Lh_bound} that 
\begin{equation}
\label{eq:projectionH2}
\|\cW_L-P_h \cW_L\|_{L^2(\Omega;L^2(\IS^2))} 
\leq \sqrt{2} c h^2\|\cW_L\|_{L^2(\Omega;H^2(\IS^2))}.
\end{equation}
By interpolation we therefore obtain for $s \in [0,2]$
\begin{equation}
\label{eq:inters}
\|\cW_L-P_h \cW_L\|_{L^2(\Omega;L^2(\IS^2))} 
\leq (\sqrt{2} c h)^s\|\cW_L\|_{L^2(\Omega;H^s(\IS^2))}
\leq C h^s (L+1)^{s+1},
\end{equation}
where the last inequality is obtained similarly to~\eqref{eq:W_L_H2_bound} with $2$ substituted by~$s$.
We note that as usual, in order to obtain convergence in~$h$, higher order norms of~$\cW_L$ have to be bounded which grow faster in~$L$ the higher the order of the Sobolev space.

Let us return to the weak formulations~\eqref{eq:linproblem} and~\eqref{eq:subproblem} and introduce their SFEM approximations:
Find $u_{L,h}^{i} \in S_h$ such that 
\begin{equation}
\mathfrak{a}_{\IS_h^2}(u_{L,h}^{i},v_h)=(u^{i-1}_{L,h},v_h)_{L^2(\IS_h^2) }
\label{eq:linproblemSFEM}
\end{equation}
for all $v_h \in S_h$.
And similarly:
Find $u_{h,l} \in S_h$ such that
\begin{equation}
\label{eq:subprobeey}
\mathfrak{a}_{\IS_h^2,l}(u_{h,l},v_h)= (u^{\floor{\gb}}_{L,h} ,v_h)_{L^2(\IS_h^2) }
\end{equation}
for all $v_h \in S_h$ and $u^{\floor{\gb}}_{L,h} \in S_h$ obtained in~\eqref{eq:linproblemSFEM}. 

In order to bound the error of the two approximations~\eqref{eq:linproblemSFEM} and~\eqref{eq:subprobeey} in a common setting, we observe that the bilinear forms~\eqref{eq:bilinear_form} and~\eqref{eq:subprobbiform} only differ by their coefficients. Therefore, we consider a general continuous and coercive bilinear form of the form
\begin{equation*}
b_{\IS^2}(u,v)
= A(u,v)_{L^2(\IS^2)}+B \left( \nabla_{\IS^2} u ,\nabla_{\IS^2}v \right)_{L^2(\IS^2)}
\end{equation*}
with coefficients~$A,B \in \R$ (that may be chosen such that we obtain $\mathfrak{a}_{\IS^2}$ or $\mathfrak{a}_{\IS^2,l}$) and its corresponding bilinear form on~$\IS_h^2$
\begin{equation*}
b_{\IS_h^2}(u,v)
= A(u,v)_{L^2(\IS^2_h)}
+ B \left( \nabla_{\IS^2_h} u ,\nabla_{\IS^2_h}v \right)_{L^2(\IS^2_h)}.
\end{equation*}
We then consider the problems: Given $U\in L^2(\IS^2)$ find $u\in H^1(\IS^2)$ such that
\begin{equation}\label{eq:weak_general_SPDE}
b_{\IS^2}(u,v) = (U,v)_{L^2(\IS^2)}
\end{equation}
for $v\in H^1(\IS^2)$ and: Given $U_h\in S_h$ find $u_h\in S_h$ such that
\begin{equation}\label{eq:SFEM_general_SPDE}
b_{\IS_h^2}(u_h,v) = (U_h,v)_{L^2(\IS_h^2)},
\end{equation}
for all $v_h\in S_h$. We will then choose $U$ and $U_h$ as in~\eqref{eq:weak_regular_SPDE}, \eqref{eq:weak_SPDE}, \eqref{eq:linproblemSFEM}, and~\eqref{eq:subprobeey}, respectively. 
\begin{proposition}
	\label{prop:SFEM_error}
	Let $u$ be the weak solution to~\eqref{eq:weak_general_SPDE} with $U \in L^2(\gO;L^2(\IS^2))$ being a general right hand side, and denote by $u_h^\ell$ the lifted solution to~\eqref{eq:SFEM_general_SPDE} with $U_h^\ell \in L^2(\gO;L^2(\IS^2))$.
	Then the strong error is bounded by	
	\begin{equation*}
	\|u - u_{h}^\ell\|_{L^2(\Omega;L^2(\IS^2))}
	\leq c \gamma^2 \left(
	h^2 \|U\|_{L^2(\Omega;L^2(\IS^2))}
	+ \|U-U_h^\ell\|_{L^2(\Omega;L^2(\IS^2))}
	\right),
	\end{equation*}
	where $\gg = \max(A,B)$.
	If in addition $U_h^\ell$ converges to~$U$ in $L^2(\gO;L^2(\IS^2))$ then $u_{h}^\ell$ converges to~$u$.
\end{proposition}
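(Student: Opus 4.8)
The bound is pathwise: for $\IP$-a.e.\ $\go \in \gO$, problems~\eqref{eq:weak_general_SPDE} and~\eqref{eq:SFEM_general_SPDE} are deterministic linear elliptic problems with data $U(\go,\cdot) \in L^2(\IS^2)$ and $U_h(\go,\cdot) \in S_h$, so the plan is to first prove the deterministic estimate
\begin{equation*}
\|u - u_h^\ell\|_{L^2(\IS^2)} \le c\,\gg^2\bigl( h^2\|U\|_{L^2(\IS^2)} + \|U - U_h^\ell\|_{L^2(\IS^2)} \bigr),
\end{equation*}
with $c$ depending only on $\IS^2$ and the shape regularity of the family $\{\cT_h\}$, and then to square it, use $(a+b)^2 \le 2(a^2+b^2)$, take expectations, and take square roots to pass to $L^2(\gO;L^2(\IS^2))$. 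The concluding convergence assertion is then immediate, since the right-hand side tends to $0$ as $h \to 0$ whenever $\|U - U_h^\ell\|_{L^2(\gO;L^2(\IS^2))} \to 0$.

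For the deterministic estimate I would stay inside the SFEM framework of~\cite{Dziuk2013} and use three ingredients. First, elliptic regularity and stability for~\eqref{eq:weak_general_SPDE} on the smooth compact surface $\IS^2$: from $A u - B \gD_{\IS^2} u = U$ one obtains $\|u\|_{H^1(\IS^2)} + \|u\|_{H^2(\IS^2)} \lesssim \|U\|_{L^2(\IS^2)}$, with a constant depending on the coefficients $A,B$ (through the continuity constant $\gg$ of $b_{\IS^2}$ and its coercivity constant $\min(A,B)$). Second, the geometric perturbation estimates of~\cite{Dziuk2013}: writing the surface measure as $\dd A = \mu_h \, \dd A_h$ on $\IS_h^2$ one has $\|1 - \mu_h\|_{L^\infty(\IS_h^2)} \lesssim h^2$, and there is a symmetric matrix field $\cR_h$ on $\IS_h^2$ with $\|\cR_h\|_{L^\infty} \lesssim 1$ and $\|\cR_h - I\|_{L^\infty} \lesssim h^2$ through which the discrete tangential gradient pulls back onto $\IS^2$; since $b_{\IS^2}$ and $b_{\IS_h^2}$ are linear in the coefficients, this yields the consistency bounds
\begin{align*}
\bigl|b_{\IS^2}(\gf^\ell, \psi^\ell) - b_{\IS_h^2}(\gf, \psi)\bigr| &\lesssim \gg\, h^2\, \|\gf^\ell\|_{H^1(\IS^2)}\, \|\psi^\ell\|_{H^1(\IS^2)}, \\
\bigl|(\gf, \psi)_{L^2(\IS_h^2)} - (\gf^\ell, \psi^\ell)_{L^2(\IS^2)}\bigr| &\lesssim h^2\, \|\gf^\ell\|_{L^2(\IS^2)}\, \|\psi^\ell\|_{L^2(\IS^2)}
\end{align*}
for $\gf, \psi \in S_h$, together with the $h$-uniform norm equivalences $\|\gf\|_{L^2(\IS_h^2)} \simeq \|\gf^\ell\|_{L^2(\IS^2)}$ and $\|\gf\|_{H^1(\IS_h^2)} \simeq \|\gf^\ell\|_{H^1(\IS^2)}$. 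Third, the nodal interpolation estimate $\|\eta - I_h\eta\|_{L^2(\IS^2)} + h\|\eta - I_h\eta\|_{H^1(\IS^2)} \lesssim h^2 \|\eta\|_{H^2(\IS^2)}$ for $\eta \in H^2(\IS^2)$, with $I_h$ the lifted interpolant introduced above; this uses the Sobolev embedding $H^2(\IS^2) \hookrightarrow C^0(\IS^2)$, valid in two dimensions.

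The argument then proceeds in two steps. In the first step ($H^1$-estimate), writing $\tilde I_h u \in S_h$ for the un-lifted interpolant, I would subtract $\tilde I_h u$ from $u_h$, test the resulting error equation with $v_h = u_h - \tilde I_h u \in S_h$, insert $\pm(U, v_h^\ell)_{L^2(\IS^2)}$ and $\pm b_{\IS^2}((\tilde I_h u)^\ell, v_h^\ell)$, and use coercivity of $b_{\IS_h^2}$, continuity of $b_{\IS^2}$, the two consistency bounds (applied with $\gf = \tilde I_h u$ and with $\gf = U_h$), and the interpolation estimate, to obtain
\begin{equation*}
\|u - u_h^\ell\|_{H^1(\IS^2)} \lesssim \gg\bigl( h\|u\|_{H^2(\IS^2)} + h^2 \|U\|_{L^2(\IS^2)} + \|U - U_h^\ell\|_{L^2(\IS^2)} \bigr).
\end{equation*}
In the second step (Aubin--Nitsche duality), since $b_{\IS^2}$ is symmetric I would let $z \in H^1(\IS^2)$ solve $b_{\IS^2}(v, z) = (u - u_h^\ell, v)_{L^2(\IS^2)}$ for all $v \in H^1(\IS^2)$, so that $\|z\|_{H^2(\IS^2)} \lesssim \|u - u_h^\ell\|_{L^2(\IS^2)}$, and split
\begin{equation*}
\|u - u_h^\ell\|_{L^2(\IS^2)}^2 = b_{\IS^2}\bigl(u - u_h^\ell,\, z - I_h z\bigr) + b_{\IS^2}\bigl(u - u_h^\ell,\, I_h z\bigr).
\end{equation*}
The first term is controlled by continuity of $b_{\IS^2}$ and the interpolation estimate; for the second term I would use $b_{\IS^2}(u, I_h z) = (U, I_h z)_{L^2(\IS^2)}$, the discrete equation $(U_h, \tilde I_h z)_{L^2(\IS_h^2)} = b_{\IS_h^2}(u_h, \tilde I_h z)$, and the two consistency bounds to reduce it to geometric $O(h^2)$ contributions and the data mismatch $\|U - U_h^\ell\|_{L^2(\IS^2)}$. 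Cancelling one factor of $\|u - u_h^\ell\|_{L^2(\IS^2)}$, substituting the first-step estimate together with $\|u\|_{H^1(\IS^2)}, \|u\|_{H^2(\IS^2)} \lesssim \|U\|_{L^2(\IS^2)}$, and collecting the coefficient dependence (which enters linearly through $\gg$ at every perturbation and continuity step, hence quadratically after the substitution) yields the claimed bound.

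I expect the main obstacle to be the bookkeeping of the $A,B$-dependence: one must verify that the coercivity and elliptic-regularity constants are controlled uniformly over the ranges of $A$ and $B$ relevant to the recursion~\eqref{eq:linproblemSFEM} and the sinc subproblems~\eqref{eq:subprobeey}, and that all remaining coefficient dependence collapses to exactly $\gg^2$. Tied to this is checking that the geometric consistency terms are genuinely of order $h^2$ rather than $h$: this is precisely where the quadratic smallness $\|1 - \mu_h\|_{L^\infty}, \|\cR_h - I\|_{L^\infty} = O(h^2)$ from~\cite{Dziuk2013} is indispensable, and where (cf.~\cite{Dziuk1988}) the restriction to a two-dimensional surface enters, through the Sobolev inequality underlying the interpolation estimate above.
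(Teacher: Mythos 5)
Your proposal is correct and follows essentially the same route as the paper: the paper's proof likewise reduces the claim to the pathwise deterministic inequality and then invokes the standard SFEM machinery of Dziuk--Elliott (geometric consistency estimates for the bilinear forms, interpolation, and an Aubin--Nitsche duality argument), all of which you spell out explicitly. The one point the paper also leaves implicit, deferring to the cited references, is the precise bookkeeping showing that the coefficient dependence collapses to $c\gamma^2$ with $\gamma=\max(A,B)$, which you correctly identify as the delicate step.
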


\begin{proof}	
		The claim follows from the corresponding deterministic inequality
		\begin{equation*}
		\|u-u_{h}^\ell\|_{L^2(\IS^2)} \leq c\gamma^2 \left( h^2 \|U\|_{L^2(\IS^2)}+\|U-U_h^\ell\|_{L^2(\IS^2)}\right)
		\end{equation*}
		with non-random general right hand side $U$ and~$U_h^\ell$, respectively. It is proven by traditional finite element techniques and an Aubin--Nitsche duality argument. The bilinear forms defined on~$\IS^2$ and~$\IS_h^2$ are compared using estimates of the geometric errors of the bilinear forms, see \cite[Lemma 4.7]{Dziuk2013}. For details of the proof, see \cite[Section~4]{Jansson2019} as well as~\cite{Dziuk1988,Dziuk2013}.
\end{proof}

Before stating and proving a bound on the error of the entire recursion scheme, we begin by stating and proving a proposition which allows us to bound the error of the final fractional problem. We introduce our final approximation (for $\fbet \neq 0$) as
\begin{equation}\label{eq:finalapprox}
u_{L,h}^\ell
=\frac{2 k \sin(\pi \fbet)}{\pi} \sum_{l=-K^{-}}^{K^+} e^{2\fbet y_{l}}u_{h,l}^{\ell},
\end{equation}
where $u_{h,l}^{\ell}$ is the lifted solution to~\eqref{eq:subprobeey}.

\begin{proposition}\label{lem:firststep}
	Let $u_{L,Q,k}$ be given by~\eqref{eq:diff} and $u_{L,h}^\ell$ be given by~\eqref{eq:finalapprox}. Then 
	\begin{equation*}
	\|u_{L,Q,k}-u_{L,h}^\ell\|_{L^2(\Omega; L^2(\IS^2))}
	\leq c_2(k,\beta) \left( (L+2)h^2 + \|\cW_L-\cW_{L,h}^\ell\|_{L^2(\Omega;L^2(\IS^2))}\right)
	\end{equation*}
	for some constant $c_2(k,\beta)$.
\end{proposition}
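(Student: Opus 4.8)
The plan is to compare the two quadrature sums term by term and invoke Proposition~\ref{prop:SFEM_error} for each subproblem. Recall that
\[
u_{L,Q,k}=\frac{2k\sin(\pi\fbet)}{\pi}\sum_{l=-K^-}^{K^+}e^{2\fbet y_l}\bigl(I+e^{2y_l}\cL\bigr)^{-1}u_L^{\floor{\beta}}
\]
is the exact quadrature applied to the exact solution $u_L^{\floor{\beta}}$, whereas
\[
u_{L,h}^\ell=\frac{2k\sin(\pi\fbet)}{\pi}\sum_{l=-K^-}^{K^+}e^{2\fbet y_l}u_{h,l}^\ell
\]
uses the SFEM solutions $u_{h,l}^\ell$ to the subproblems~\eqref{eq:subprobeey}, which themselves take the already-approximated $u_{L,h}^{\floor{\beta}}\in S_h$ as right hand side. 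So the difference of the two sums, after a triangle inequality on the finitely many terms, reduces to bounding $\|(I+e^{2y_l}\cL)^{-1}u_L^{\floor{\beta}}-u_{h,l}^\ell\|_{L^2(\Omega;L^2(\IS^2))}$ for each $l$, where the exact term solves~\eqref{eq:weak_SPDE} (with $\mathfrak a_{\IS^2,l}$, coefficients $A=1+e^{2y_l}\kappa^2$, $B=e^{2y_l}$) and the approximate term solves~\eqref{eq:subprobeey}.

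First I would apply Proposition~\ref{prop:SFEM_error} with $U=u_L^{\floor{\beta}}$ and $U_h^\ell=u_{L,h}^{\ell,\floor{\beta}}$ to get, for each $l$,
\[
\bigl\|(I+e^{2y_l}\cL)^{-1}u_L^{\floor{\beta}}-u_{h,l}^\ell\bigr\|_{L^2(\Omega;L^2(\IS^2))}
\le c\,\gamma_l^2\Bigl(h^2\|u_L^{\floor{\beta}}\|_{L^2(\Omega;L^2(\IS^2))}+\|u_L^{\floor{\beta}}-u_{L,h}^{\ell,\floor{\beta}}\|_{L^2(\Omega;L^2(\IS^2))}\Bigr),
\]
with $\gamma_l=\max(1+e^{2y_l}\kappa^2,e^{2y_l})$. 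Since $|y_l|\le k\max(K^+,K^-)=\Op(1/k)$, the factor $e^{2\fbet y_l}\gamma_l^2$ is bounded uniformly in $l$ by a constant depending only on $k$, $\kappa$, and $\beta$; summing the $K^++K^-+1=\Op(1/k^2)$ terms and absorbing the prefactor $2k\sin(\pi\fbet)/\pi$ yields a bound of the form $c_2(k,\beta)\bigl(h^2\|u_L^{\floor{\beta}}\|_{L^2(\Omega;L^2(\IS^2))}+\|u_L^{\floor{\beta}}-u_{L,h}^{\ell,\floor{\beta}}\|_{L^2(\Omega;L^2(\IS^2))}\bigr)$. It then remains to control the two quantities on the right in terms of $L$ and $h$: for the first, \eqref{eq:Cov_uLi} (or \eqref{eq:cov_W_L} in the degenerate case $\floor{\beta}=0$) gives $\|u_L^{\floor{\beta}}\|_{L^2(\Omega;L^2(\IS^2))}\le L+1\le L+2$; for the second, one unwinds the recursion~\eqref{eq:linproblemSFEM} versus~\eqref{eq:weak_regular_SPDE} by applying Proposition~\ref{prop:SFEM_error} $\floor{\beta}$ times, each step contributing an $h^2$-term weighted by $\|u_L^{i}\|_{L^2(\Omega;L^2(\IS^2))}\le\kappa^{1-2i}(2i-1)^{-1/2}$ (again by~\eqref{eq:Cov_uLi}), together with the noise discretization error $\|\cW_L-\cW_{L,h}^\ell\|_{L^2(\Omega;L^2(\IS^2))}$ propagated through. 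This telescoping gives $\|u_L^{\floor{\beta}}-u_{L,h}^{\ell,\floor{\beta}}\|_{L^2(\Omega;L^2(\IS^2))}\le C\bigl(h^2+\|\cW_L-\cW_{L,h}^\ell\|_{L^2(\Omega;L^2(\IS^2))}\bigr)$ with $C$ depending on $\beta$ and $\kappa$ but not on $L$ (using $\floor{\beta}\ge1$, so the $h^2$ prefactors are $L$-independent), and combining everything produces the stated estimate with a possibly enlarged constant $c_2(k,\beta)$.

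The main obstacle I anticipate is bookkeeping the dependence on $L$ correctly and cleanly: the statement claims an $(L+2)h^2$ term, so all $h^2$-contributions must ultimately be multiplied by at most a linear factor in $L$. This forces care about which norm bound is used where — the only place a factor linear in $L$ legitimately enters is $\|u_L^{\floor{\beta}}\|$ when $\floor{\beta}=0$ (i.e. $u_L^0=\cW_L$, with $\|\cW_L\|_{L^2(\Omega;L^2(\IS^2))}=L+1$ by~\eqref{eq:cov_W_L}), whereas for $\floor{\beta}\ge1$ the bound~\eqref{eq:Cov_uLi} is $L$-independent, so in that regime one even gets a better constant. One must also make sure the recursion-unwinding step does not accumulate extra powers of $L$: since Proposition~\ref{prop:SFEM_error} applied at level $i$ produces $h^2\|u_L^{i}\|$ which by~\eqref{eq:Cov_uLi} is $\le h^2\kappa^{1-2i}(2i-1)^{-1/2}$, free of $L$, the recursion is harmless. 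A minor secondary point is the degenerate case $\fbet=0$, which is excluded from this proposition's hypotheses (it concerns $u_{L,Q,k}$ and $u_{L,h}^\ell$, both defined only for $\fbet\ne0$), so no separate treatment is needed there, though one should still note that $\floor{\beta}$ may be $0$ when $1/2<\beta<1$, which is exactly where the $L+1$ factor appears.
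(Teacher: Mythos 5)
Your overall strategy --- triangle inequality over the quadrature terms, Proposition~\ref{prop:SFEM_error} applied once per subproblem with $U=u_L^{\floor{\beta}}$, then unwinding the recursion~\eqref{eq:linproblemSFEM} by applying Proposition~\ref{prop:SFEM_error} $\floor{\beta}$ more times --- is exactly the paper's proof. Your cruder treatment of the $l$-sum (maximal term times number of terms, instead of the paper's explicit geometric-series evaluation) is harmless, since the constant is only required to depend on $k$, $\beta$, and $\kappa$.

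There is, however, a concrete error in the recursion bookkeeping, precisely at the spot you flagged as the main obstacle. You claim that for $\floor{\beta}\ge 1$ each step of the unwinding contributes an $h^2$-term weighted by $\|u_L^i\|\le\kappa^{1-2i}(2i-1)^{-1/2}$, so that $\|u_L^{\floor{\beta}}-u_{L,h}^{\floor{\beta},\ell}\|\le C\bigl(h^2+\|\cW_L-\cW_{L,h}^\ell\|\bigr)$ with $C$ independent of $L$, and that ``the only place a factor linear in $L$ legitimately enters'' is the case $\floor{\beta}=0$. This is false. The recursion bottoms out at $\cL u_L^1=u_L^0=\cW_L$, so the first application of Proposition~\ref{prop:SFEM_error} produces the term $h^2\|u_L^0\|_{L^2(\Omega;L^2(\IS^2))}=h^2(L+1)$ by~\eqref{eq:cov_W_L}; the bound~\eqref{eq:Cov_uLi} you invoke holds only for $i\ge1$ and does not cover this base case. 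The paper's proof keeps the term explicitly: its recursive estimate is
\begin{equation*}
h^2\sum_{i=0}^{\floor{\beta}-1}(c\gamma^2)^{\floor{\beta}-i}\|u^i_L\|_{L^2(\Omega;L^2(\IS^2))}+(c\gamma^2)^{\floor{\beta}}\|\cW_L-\cW_{L,h}^\ell\|_{L^2(\Omega;L^2(\IS^2))},
\end{equation*}
whose $i=0$ summand is $(c\gamma^2)^{\floor{\beta}}h^2(L+1)$. Hence the factor $L+1$ multiplying $h^2$ is present for every value of $\floor{\beta}$, not only $\floor{\beta}=0$. Your final bound still matches the statement, which allows $(L+2)h^2$, but the intermediate claim of an $L$-independent constant for $\floor{\beta}\ge1$ would, if true, yield a strictly stronger estimate than this argument can deliver. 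The fix is simply to include the $i=0$ term of the telescoping sum with the bound $\|\cW_L\|_{L^2(\Omega;L^2(\IS^2))}=L+1$.
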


\begin{proof}
	We note that we can bound $\|u_{L,Q,k}-u_{L,h}^\ell\|_{L^2(\Omega, L^2(\IS^2))}$ by using Equations~\eqref{eq:diff}, \eqref{eq:SPDE_subproblems}, and~\eqref{eq:subprobeey} together with the triangle inequality,
	\begin{equation*}
	\|u_{L,Q,k}-u_{L,h}^\ell\|_{L^2(\Omega; L^2(\IS^2))}
	\leq \frac{2 k |\sin(\pi \{\beta)\}|}{\pi}\sum_{l=-K^{-}}^{K^+} e^{2\{\beta\} y_{l}}
	\left\| u_{l}-u_{h,l}^\ell\right\|_{L^2(\Omega;L^2(\IS^2))}.
	\end{equation*}
	
	Let us start by bounding $\| u_{l}-u_{h,l}^\ell\|_{L^2(\Omega;L^2(\IS^2))}$.
	Applying Proposition~\ref{prop:SFEM_error} with $\gamma_l=\max(1+e^{2y_l}\kappa^2,e^{2y_l})$ and right hand side $U=u^{\floor{\beta}}_L $ and $U_h=u_{L,h}^{\floor{\beta}}$ yields
	\begin{equation*}
	\| u_{l}-u_{h,l}^\ell\|_{L^2(\Omega;L^2(\IS^2))}
	\le c \gg_l^2 \left(h^2 \|u^{\floor{\beta}}_L\|_{L^2(\Omega, L^2(\IS^2))} 
	+ \|u_L^{\floor{\beta}}-u_{L,h}^{\floor{\beta},\ell}\|_{L^2(\Omega;L^2(\IS^2))}\right),
	\end{equation*}
	where $u_{L,h}^{\floor{\beta},\ell}$ denotes the lifted solution to~\eqref{eq:linproblemSFEM} or the lifted white noise approximation if $\floor{\beta}= 0$.
	By Proposition~\ref{prop:SFEM_error}, we can recursively bound the last term by
	\begin{align*}
	\|u_L^{\floor{\beta}}-u_{L,h}^{\floor{\beta},\ell}\|_{L^2(\Omega;L^2(\IS^2))}
	& \le h^2 \sum_{i=0}^{\floor{\beta}-1} (c\gamma^2)^{\floor{\beta}-i} \|u^i_L\|_{L^2(\Omega, L^2(\IS^2))}\\
	&\qquad +(c\gamma^2)^{\floor{\beta}}\|\cW_L-\cW_{L,h}^\ell\|_{L^2(\Omega;L^2(\IS^2))}
	\end{align*}
	with $\gg = \max\{1,\gk^2\}$.
	The estimates~\eqref{eq:cov_W_L} and~\eqref{eq:Cov_uLi} yield
	\begin{equation*}
	\|u^i_L\|_{L^2(\Omega, L^2(\IS^2))}
	\le
	\begin{cases}
	L+1, & i = 0,\\
	\kappa^{1-2i}(2i-1)^{-1/2}, & \text{else},
	\end{cases}
	\end{equation*}
	which leads to
	\begin{align*}
	\|u_L^{\floor{\beta}}-u_{L,h}^{\floor{\beta},\ell}\|_{L^2(\Omega;L^2(\IS^2))}
	& \le h^2 \sum_{i=1}^{\floor{\beta}-1} c^{\floor{\beta}-i} \max \{ \gk^{1-2i}, \gk^{4\floor{\beta}+1-6i} \} (2i-1)^{-1/2}\\
	&\quad~ {+}c^{\floor{\beta}} \max\{1, \gk^{4\floor{\gb}} \} \!
	\!\left(\! h^2 (L{+}1) {+} \|\cW_L{-}\cW_{L,h}^\ell\|_{L^2(\Omega;L^2(\IS^2))} \!\right)\!.
	\end{align*}
	This implies that the overall error is bounded by
	\begin{align*}
	& \| u_{l}-u_{h,l}^\ell\|_{L^2(\Omega;L^2(\IS^2))}\\
	&\quad~ \le c \gg_l^2 h^2\! \left(\!
	\sum_{i=1}^{\floor{\beta}} c^{\floor{\beta}-i}\!\max\{ \gk^{1-2i}, \gk^{4\floor{\beta}{+}1-6i} \} (2i{-}1)^{-1/2}
	{+}c^{\floor{\beta}} \max\{1, \gk^{4\floor{\gb}} \}\!(L{+}1)\!\right)\! \\
	&\quad~\qquad +c \gg_l^2 c^{\floor{\beta}} \max\{1, \gk^{4\floor{\gb}} \} \|\cW_L-\cW_{L,h}^\ell\|_{L^2(\Omega;L^2(\IS^2))}\\
	&\quad~ = c \gg_l^2 \left(
	\left(c_{\gk,\floor{\gb}}^{(1)} + c_{\gk,\floor{\gb}}^{(2)} \, (L+1) \right) h^2 
	+ c_{\gk,\floor{\gb}}^{(2)} \, \|\cW_L-\cW_{L,h}^\ell\|_{L^2(\Omega;L^2(\IS^2))}\right),
	\end{align*}
	and we observe that $\gg_l$ is the only component that depends on~$l$. Therefore, it only remains to estimate
	\begin{align*}
	\sum_{l=-K^{-}}^{K^+} e^{2\{\beta\} y_{l}} \gg_l^2
	& \leq 4 \max \{1,\gk^4\} \sum_{l=-K^{-}}^{K^+} e^{2(\fbet+2) k l }\\
	& = 4 \max \{1,\gk^4\}
	\!\left(\sum_{l=1}^{K^{-}} e^{-2(\fbet+2) k l }{+}\sum_{l=0}^{K^+}e^{2(\fbet+2) k l }\right)\!\\
	& = 4 \max \{1,\gk^4\}
	\!\left(\!
	e^{-2(\fbet+2)k} \frac{1{-}e^{-2(\fbet+2)k K^-}}{1{-}e^{-2(\fbet+2)k} }
	{+}\frac{e^{2(\fbet+2)k(K^++1)}{-}1}{e^{2(\fbet+2)k}{-}1} \!\right)\!\\
	& = c_{\gk, \{\gb\}, k},
	\end{align*}
	where we used the properties of the geometric series in the last step.
	This allows us to finally obtain
	\begin{equation*}
	\|u_{L,Q,k}-u_h^\ell\|_{L^2(\Omega; L^2(\IS^2))}
	\leq c_2(k,\beta) \left( (L+2)h^2 + \|\cW_L-\cW_{L,h}^\ell\|_{L^2(\Omega;L^2(\IS^2))}\right)
	\end{equation*}
	with
	\begin{equation*}
	c_2(k,\beta)
	= \frac{2 k |\sin(\pi \{\beta\})|}{\pi}
	c_{\gk, \{\gb\}, k}
	\max\{ c_{\gk,\floor{\gb}}^{(1)}, c_{\gk,\floor{\gb}}^{(2)}\},
	\end{equation*}
	which concludes the proof.
\end{proof}

We are now ready to state our main result on the convergence of the SFEM approximation to the solution of~\eqref{eq:theone}.

\begin{theorem}\label{th:yt1}
	Let $u$ be the solution to~\eqref{eq:theone} with $\beta>1/2$, and let $u_{L,h}^\ell$ be given by~\eqref{eq:finalapprox} for $\fbet \neq 0$ and be the lifted solution to the recursion~\eqref{eq:linproblemSFEM} in the case when $\gb$ is a positive integer.
	Then the strong error is bounded by 
	\begin{align*}
	& \|u-u_{L,h}^\ell\|_{L^2(\Omega, L^2(\IS^2))}\\
	& \qquad \le C_\gk \left(\frac{1}{2\beta-1}+\frac{1}{4\beta-1}\right)L^{1-2\beta}
	+c_1(k,L, \beta)\\
	& \hspace*{3.95em} +\!\left((1-\delta_{0 ,\fbet}) c_2(k,\beta) + \delta_{0 ,\fbet} c_3(\beta)\right)\!
	\!\left((L+2)h^2+\|\cW_L-\cW_{L,h}^\ell\|_{L^2(\Omega, L^2(\IS^2))}\right)\!
	\end{align*}
	with constants defined in Propositions~\ref{prop:truncref}, \ref{prop:quaderr}, and~\ref{lem:firststep}.
	If, in addition, $\|\cW_L-\cW_{L,h}^\ell\|_{L^2(\Omega, L^2(\IS^2))}$ is chosen as in Equation~\eqref{eq:inters}, the error to the fractional problem is for $s \in [0,2]$ bounded by
	\begin{equation*}
	\|u-u_{L,h}^\ell\|_{L^2(\Omega, L^2(\IS^2))} \leq C(L+1)\left(L^{-2\beta}+e^{-\pi^2/(4k)}+ h^s(L+1)^{s}\right).
	\end{equation*}
\end{theorem}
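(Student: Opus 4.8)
The plan is to combine the three error contributions that have already been quantified — the spectral truncation error (Proposition~\ref{prop:truncref}), the sinc-quadrature error (Proposition~\ref{prop:quaderr}), and the SFEM-plus-recursion error (Proposition~\ref{lem:firststep}) — via the triangle inequality, and then to absorb all the $L$-dependent, $k$-dependent and $\beta$-dependent constants into a single generic constant~$C$ for the final simplified bound. Concretely, I would first write
\begin{align*}
\|u-u_{L,h}^\ell\|_{L^2(\Omega;L^2(\IS^2))}
&\le \|u-u_L\|_{L^2(\Omega;L^2(\IS^2))}
+ \|u_L-u_{L,Q,k}\|_{L^2(\Omega;L^2(\IS^2))}\\
&\quad + \|u_{L,Q,k}-u_{L,h}^\ell\|_{L^2(\Omega;L^2(\IS^2))}
\end{align*}
in the fractional case $\fbet\neq 0$, where $u_L$ solves~\eqref{eq:smooth_SPDE}, $u_{L,Q,k}$ is the sinc-quadrature approximation~\eqref{eq:diff}, and $u_{L,h}^\ell$ is~\eqref{eq:finalapprox}. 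In the non-fractional case $\fbet=0$ the middle term vanishes ($u_L=u_L^{\floor{\beta}}$, no quadrature), and the last term is bounded directly by applying Proposition~\ref{prop:SFEM_error} recursively down the chain~\eqref{eq:linproblemSFEM}, producing a constant $c_3(\beta)$ of exactly the same shape as $c_2(k,\beta)$ but without the sinc prefactor; the two cases are then unified by the Kronecker-delta weighting $(1-\delta_{0,\fbet})c_2 + \delta_{0,\fbet}c_3$. Inserting Propositions~\ref{prop:truncref}, \ref{prop:quaderr}, and~\ref{lem:firststep} into the three summands gives the first displayed inequality verbatim.

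For the simplified second bound I would specialize $\|\cW_L-\cW_{L,h}^\ell\|_{L^2(\Omega;L^2(\IS^2))}\le Ch^s(L+1)^{s+1}$ from~\eqref{eq:inters}, so that the noise term and the $(L+2)h^2$ term can be merged — note $h^2(L+2)\le 2h^2(L+1)\le C h^s (L+1)^{s+1}$ for $s\le 2$ and $h\le 1$ — into a single $C(L+1)h^s(L+1)^s$ contribution. The truncation term $L^{1-2\beta}$ is rewritten as $(L+1)L^{-2\beta}$ up to a constant. The main subtlety is the sinc term $c_1(k,L,\beta)$: by Proposition~\ref{prop:quaderr} it carries a factor $\bigl(e^{-\pi^2/(4k)}/\sinh(\pi^2/(4k)) + e^{-\pi^2/(2k)}\bigr)$ times $\bigl(\delta_{0,\floor{\beta}}(L+1)+(1-\delta_{0,\floor{\beta}})\kappa^{1-2\floor{\beta}}(2\floor{\beta}-1)^{-1/2}\bigr)$; since $e^{-\pi^2/(4k)}/\sinh(\pi^2/(4k)) = 2e^{-\pi^2/(2k)}/(1-e^{-\pi^2/(2k)}) \le C e^{-\pi^2/(2k)} \le C e^{-\pi^2/(4k)}$ for $k$ bounded, this whole bracket is $\le C(L+1)e^{-\pi^2/(4k)}$, absorbing the $\floor{\beta}$-dependent factor into $C$. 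Factoring out $(L+1)$ from all three pieces yields exactly
\begin{equation*}
\|u-u_{L,h}^\ell\|_{L^2(\Omega;L^2(\IS^2))} \le C(L+1)\bigl(L^{-2\beta}+e^{-\pi^2/(4k)}+h^s(L+1)^s\bigr).
\end{equation*}

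The step I expect to require the most care is tracking the $k$-dependence so that the constant $C$ in the final bound is genuinely independent of~$k$ (as the statement implicitly demands, since $k\to 0$ is the regime of interest). The constant $c_2(k,\beta)$ from Proposition~\ref{lem:firststep} contains the factor $\tfrac{2k|\sin(\pi\fbet)|}{\pi}\,c_{\kappa,\fbet,k}$, and $c_{\kappa,\fbet,k}$ is a sum of geometric series over $K^\pm \sim k^{-2}$ terms with ratio $e^{2(\fbet+2)k}>1$, so one must check that $k\cdot c_{\kappa,\fbet,k}$ stays bounded as $k\to 0$; indeed the dominant sum $\sum_{l=0}^{K^+}e^{2(\fbet+2)kl}$ behaves like $e^{2(\fbet+2)k(K^++1)}/(e^{2(\fbet+2)k}-1)\sim \tfrac{1}{2(\fbet+2)k}e^{\pi^2(\fbet+2)/(2(1-\fbet)k)}$, which multiplied by~$k$ is bounded only once this exponential growth is either (i) subsumed into the "for $k$ in a bounded range" reading of the constant, or (ii) compensated against the $e^{-\pi^2/(4k)}$ gains — the cleanest resolution, and the one I would adopt, is to state the final inequality for $k$ bounded away from $0$ and $\infty$ in the relevant balancing regime, so that $C$ may legitimately depend on an upper bound for $k$; this matches the paper's subsequent error-balancing discussion where $k\asymp |\ln h|^{-1}$.
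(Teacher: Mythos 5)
Your proof follows the paper's argument exactly: the triangle inequality splits the error into the truncation, quadrature, and SFEM contributions, which are bounded by Propositions~\ref{prop:truncref}, \ref{prop:quaderr}, and~\ref{lem:firststep} respectively (with the two-term splitting and the recursive application of Proposition~\ref{prop:SFEM_error} yielding $c_3(\beta)$ in the case $\fbet=0$), and the simplified bound follows by inserting~\eqref{eq:inters} and absorbing constants, just as in the paper. Your closing observation that $c_2(k,\beta)$ is not uniformly bounded as $k\to 0$ --- so that the constant $C$ in the final display must be read as depending on the admissible range of $k$ --- is a legitimate subtlety that the paper itself passes over in silence, and does not affect the correctness of your argument relative to the paper's.
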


\begin{proof}
	Let us start with $\fbet \neq 0$. By the triangle inequality, we obtain
	\begin{align*}
	&\|u-u_{L,h}^\ell\|_{L^2(\Omega, L^2(\IS^2))}\\
	&\quad~~ \le \|u-u_L\|_{L^2(\Omega, L^2(\IS^2))} 		
	+ \|u_L-u_{L,Q,k}\|_{L^2(\Omega, L^2(\IS^2))} 	
	+ \|u_{L,Q,k} - u_{L,h}^\ell\|_{L^2(\Omega, L^2(\IS^2))},	
	\end{align*}
	and the claim follows with Propositions~\ref{prop:truncref}, \ref{prop:quaderr}, and~\ref{lem:firststep}.
	
	For $\fbet = 0$, we split
	\begin{equation*}
	\|u-u_{L,h}^\ell\|_{L^2(\Omega, L^2(\IS^2))}
	\le \|u-u_L\|_{L^2(\Omega, L^2(\IS^2))} 		
	+ \|u_L - u_{L,h}^\ell\|_{L^2(\Omega, L^2(\IS^2))}. 	
	\end{equation*}
	The first term is again bounded by Proposition~\ref{prop:truncref}, and the second term satisfies
	\begin{align*}
	\|u_L-u_{L,h}^{\ell}\|_{L^2(\Omega;L^2(\IS^2))}
	& \le h^2 \sum_{i=1}^{\beta-1} c^{\beta-i} \max \{ \gk^{1-2i}, \gk^{4\beta+1-6i} \} (2i-1)^{-1/2}\\
	&\quad +c^{\beta} \max\{1, \gk^{4\gb} \} 
	\left( h^2 (L+1) + \|\cW_L-\cW_{L,h}^\ell\|_{L^2(\Omega;L^2(\IS^2))} \right)\\
	& = c_3(\beta) \left( (L+2) h^2 + \|\cW_L-\cW_{L,h}^\ell\|_{L^2(\Omega;L^2(\IS^2))} \right),
	\end{align*}
	which is derived as in the proof of Proposition~\ref{lem:firststep}. This concludes the proof.
\end{proof}

We close this section with a short discussion of the computational complexity of the method.
	We begin by calibrating the different contributions appearing in the final error estimate in Theorem~\ref{th:yt1}. Thus we obtain for the space mesh size~$h$ with $s=2$ 
	\begin{align*}
	h \sim L^{-(\beta+1)}
	\end{align*}
	and for the quadrature step size~$k$ 
	\begin{align*}
	k \sim (\beta \log(L+1))^{-1}.
	\end{align*}
	This leads to an overall error of 
	\begin{align*}
	\|u-u_{L,h}^{\ell}\|_{L^2(\Omega;L^2(\IS^2))}
	\le C L^{1-2\beta}
	\sim h^{\frac{2\beta-1}{\beta + 1}}
	\sim e^{-k^{-1}}. 
	\end{align*}
	Then, given the expressions of $K^+$ and $K^-$, we see that the number of linear systems we have to solve is of order $\operatorname{O}(k^{-2}) + \floor{\beta}$ if $\beta \notin \N$, which in terms of~$h$ becomes a complexity of $\operatorname{O}((\ln h)^2)$. The overall complexity is essentially influenced by the choice of the solver for the linear system. Given a method to generate white noise on the finite element space, a naive conjugate gradient method to solve one linear system would need $\operatorname{O}(h^{-3})$ operations with the number of degrees of freedom assumed to behave as $\operatorname{O}(h^{-2})$. Using the sparsity of the finite element matrices reduces these costs to $\operatorname{O}(h^{-3/2})$. If we adapt the multilevel approach with a BPX-type preconditioning from~\cite{Herrmann2019} to our sequence of finite element spaces, the costs could be reduced even more to $\operatorname{O}(h^{-1}(\log h^{-1})^{3/2})$. The different approaches lead therefore to total computational costs of $\operatorname{O}(h^{-3}(\ln h)^2)$, $\operatorname{O}(h^{-3/2}(\ln h)^2)$, and $\operatorname{O}(h^{-1}|\log h|^{7/2})$, respectively.
	
	A naive Python-based implementation using FEniCS with the conjugate gradient method is available on a GitHub repository\footnote{\url{https://github.com/erik-grennberg-jansson/matern_sfem}}. See Figure~\ref{fig:fields} for examples of fields generated using this code with $L=100$. We furthermore emphasize that practitioners by no means are limited to the Python-FEniCS combination but that the method is implementable in other languages which is expected to lead to better running times. 

\section{Numerical experiment}
\label{seq:five}
Finally, we confirm the theoretical results obtained in Theorem~\ref{th:yt1} by a numerical simulation. We consider the case $\{\beta\}\neq 0$. Since the error induced by the truncation of the white noise~$\cW$ was already simulated and confirmed in~\cite{Lang2015}, we focus here on the confirmation of the quadrature and SFEM error, i.e., we want to show that
\begin{align*}
&\|u_L - u_{L,h}^\ell\|_{L^2(\gO;L^2(\IS^2))}\\
&\qquad\le c_1(k,\fbet)+ c_2(k,L,\gb) \left( (L+2)h^2 + \|\cW_L-\cW_{L,h}^\ell\|_{L^2(\Omega;L^2(\IS^2))}\right).
\end{align*}
The truncated white noise~$\cW_L$ is approximated using projection, which implies by Equation~\eqref{eq:projectionH2} that 
\begin{equation*}
\|\cW_L-\cW_{h,L}^\ell\|_{L^2(\Omega;L^2(\IS^2))} \le C (L+1)^3 h^2.
\end{equation*}
Therefore, we expect to see $h^2$ convergence for $c_1(k,L,\beta) $ sufficiently small, which is expected due to the exponential decay of $c_1(k,L,\beta) $ in~$k$.

We approximate the error by $500$~Monte Carlo samples and study first convergence with varying exponent~$\gb$ and then with varying constant~$\gk$ for fixed $L=1$. We discretize the sphere using an icosahedral uniform triangular mesh with triangle sizes $h=2^{-i}$ for $i= 1,\ldots,5$. The simulations are implemented in Python~3 using the FEniCS package~\cite{FEniCS} and performed on the local computational resources available at the Department of Mathematical Sciences at Chalmers University of Technology.

\begin{figure}[t]
	\centering
	\includegraphics[width=0.7\linewidth]{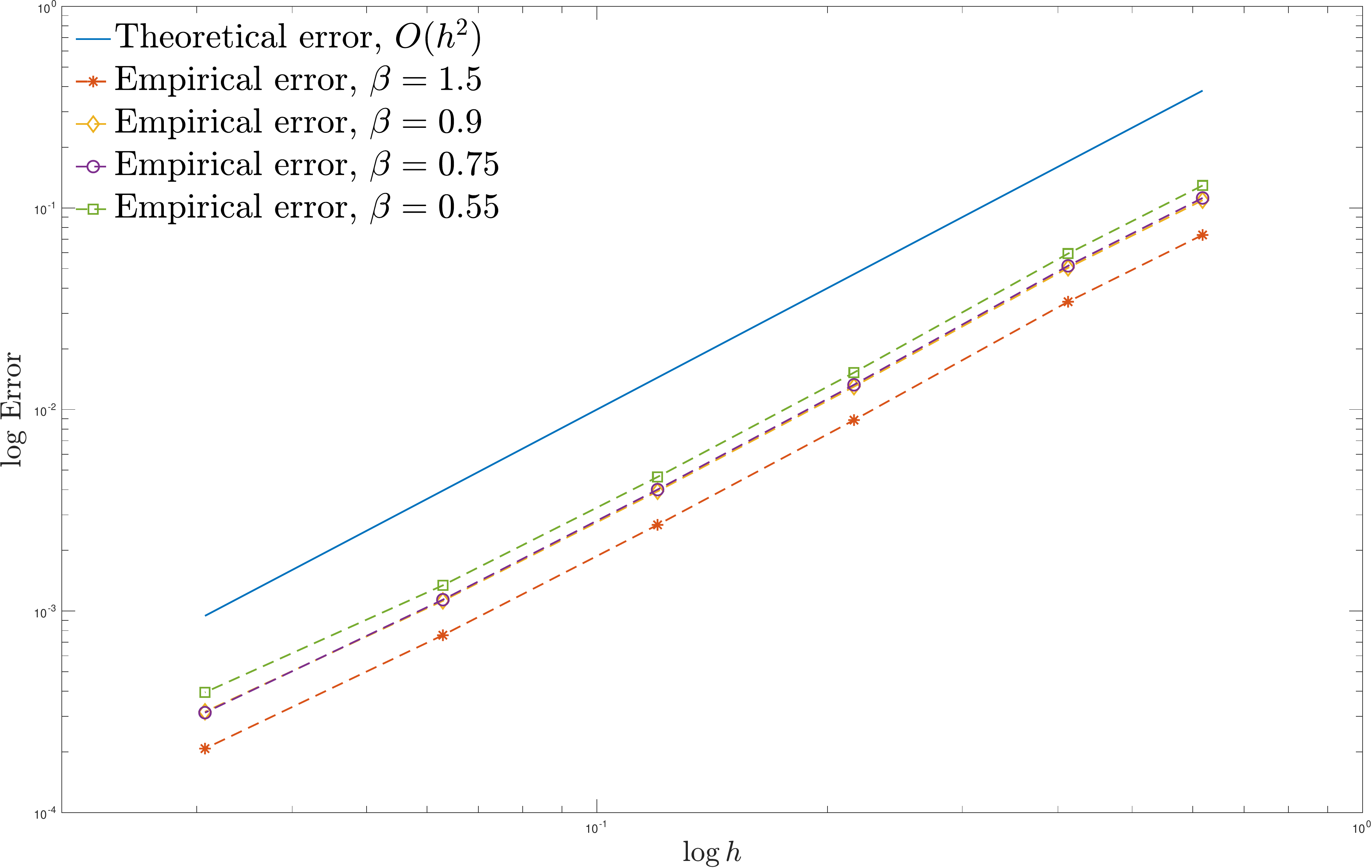}
	\caption{Strong error with varying $\beta$ and fixed $k=0.5$ and $\kappa=1$.}
	\label{fig:resvarb}
\end{figure}
In Figure~\ref{fig:resvarb} we fix $k=0.5$ and $\kappa=1$ and vary $\beta=1.5$ $\beta=0.9$, $\beta=0.75$, and $\beta=0.55$. We observe the predicted $h^2$ convergence regardless of the regularity.

Next we perform simulations for fixed $\gb = 0.75$ and varying $\gk=0.1,1,10$. We choose first $k=0.5$ as before and show the result in Figure~\ref{fig:resvarkap1}.
We observe that especially for $\gk = 0.1$, the first error term $c_1(k,L,\gb) \gk^{-1} $ seems to dominate for small~$h$. In order to decrease it, we repeat the same simulation with $k=0.1$ instead. In Figure~\ref{fig:resvarkap2} the dominance of the first error term in Figure~\ref{fig:resvarkap1} is confirmed since $h^2$ convergence is recovered now for the smaller choice of~$k$.
\begin{figure}[tbh]
	\centering
	\subfigure[$k=0.5$. \label{fig:resvarkap1}]{\includegraphics[width=0.7\linewidth]{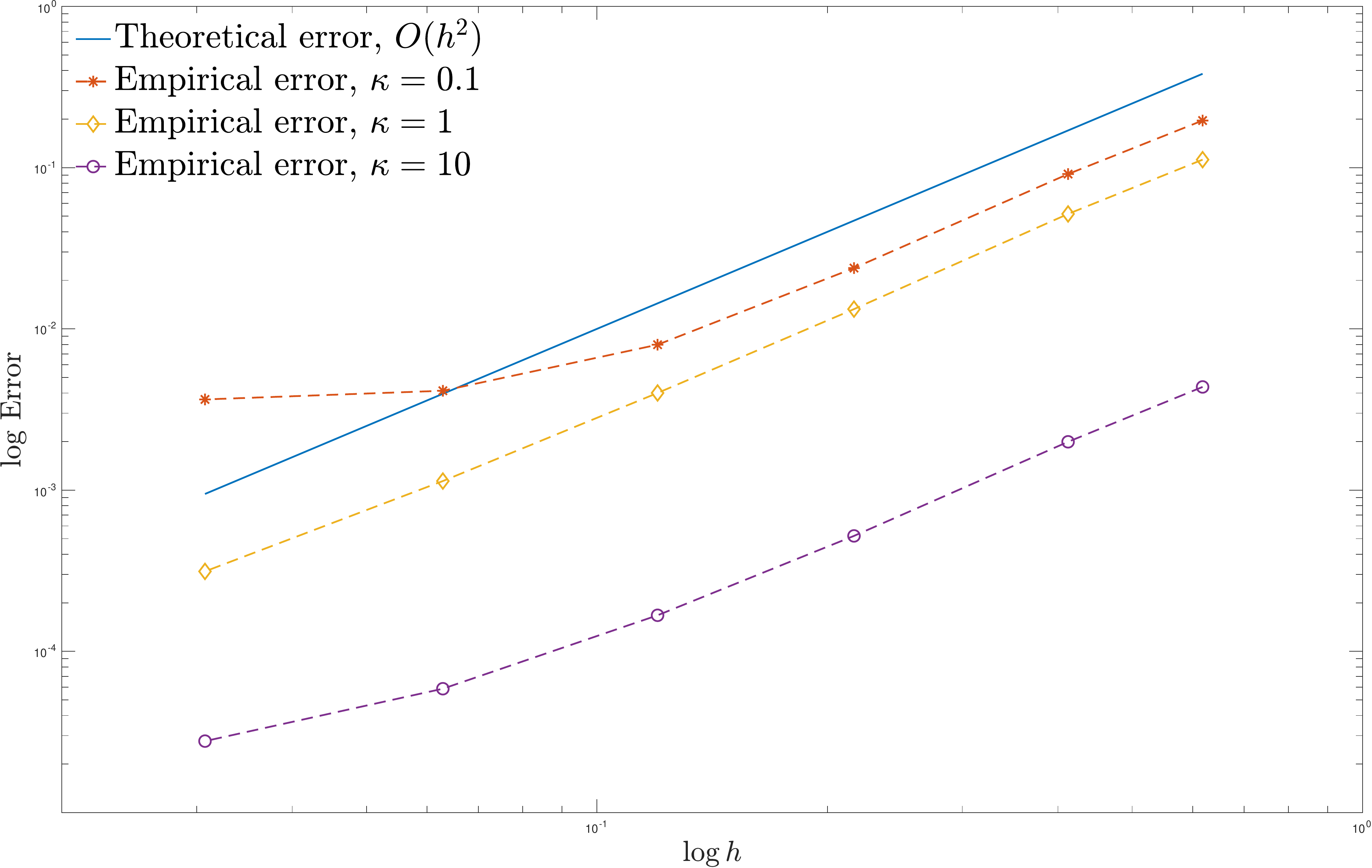}}
	\subfigure[$k=0.1$. \label{fig:resvarkap2}]{\includegraphics[width=0.7\linewidth]{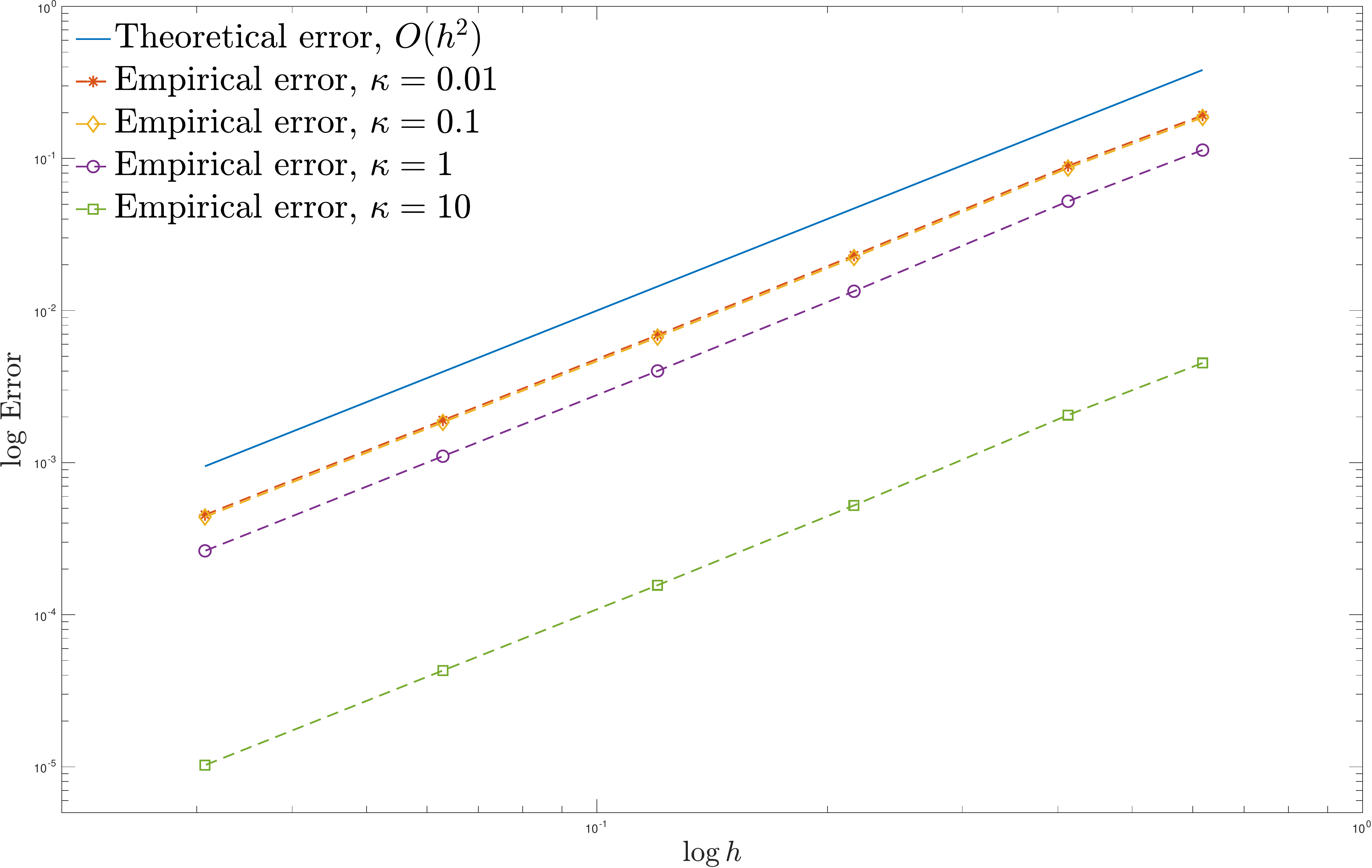}}
	\caption{Strong error with varying~$\kappa$ and $\beta = 0.75$.}
\end{figure}

		\bibliographystyle{plain}
	\bibliography{refs.bib}
	
\end{document}